\newcommand{\R}{\mathbb{R}}
\newcommand{\N}{\mathbb{N}}
\newcommand{\band}{\operatorname{Band}}
\newcommand{\s}{\operatorname{supp}}
\newtheorem{defi}{Definition}
\newtheorem{thm}{Theorem}
\newtheorem{lem}{Lemma}
\newtheorem{rmk}{Remark}
\title{On the Diameter of a 2-Sum of Polyhedra}
\titlerunning{Diameters of 2-Sums}
\date{}
\author{Steffen Borgwardt\inst{1} \and Weston Grewe\inst{1} \and Jon Lee\inst{2}}
\authorrunning{Borgwardt, Grewe, Lee}
\institute{University of Colorado Denver \and University of Michigan, Ann Arbor}
\begin{document}

\maketitle

\begin{abstract}
The study of the combinatorial diameter of a polyhedron is a classical topic in linear-programming theory due to its close connection with the possibility of a polynomial simplex-method pivot rule. The 2-sum operation is a classical operation for graphs, matrices, and matroids; we extend this definition to polyhedra. We analyze the diameters of 2-sum polyhedra, which are those polyhedra that arise from this operation. These polyhedra appear in matroid and integer-programming theory as a natural way to link two systems in a joint model with a single shared constraint and the 2-sum also appears as a key operation in Seymour's decomposition theorem for totally-unimodular matrices.  

We show that the diameter of a 2-sum polyhedron is quadratic in the diameters of its summands. The methods transfer to a linear bound for the addition of a unit column to an equality system, or equivalently, to the relaxation of an equality constraint to an inequality constraint. Further, we use our methods to analyze the distance between vertices on certain faces of a 3-sum polyhedron.
\end{abstract}

\noindent {\bf{Keywords}:} {polyhedron, edge walk, combinatorial diameter, 2-sum, 3-sum}
\\\\\noindent
{\bf{MSC}:} {52B05, 52B40, 90C05}

\section{Introduction}\label{sec:intro}

The \emph{(combinatorial) diameter} of a polyhedron is the minimum number of edges needed to form an edge walk between any pair of its vertices. The study of the diameter of a polyhedron  plays a central role in determining if there exists a pivot rule with polynomial running time for the simplex method (a longstanding open question in the theory of linear programming). Notably, the diameter is a lower bound on the number of iterations the simplex method may require, regardless of the choice of pivot rule. 

In general, tight upper bounds on the diameter are unknown. However, the diameter is at most quasi-polynomial in the number of facets $f$ and the dimension $\mathfrak{d}$, $f^{\log \mathfrak{d}+2}$ \cite{kk-92}, $(f - \mathfrak{d})^{\log \mathfrak{d}}$ \cite{t-14}, $(f - \mathfrak{d})^{\log O(\frac{\mathfrak{d}}{\log \mathfrak{d}})}$ \cite{s-19}. Additionally, the diameter of a polyhedron is polynomial in $\mathfrak{d}$ and the largest absolute value $\Delta$ over all subdeterminants of the constraint matrix: the diameter is at most $O(\Delta^2 \mathfrak{d}^4 \log(\mathfrak{d}\Delta))$ (and $O(\Delta^2 \mathfrak{d}^{3.5} \log(\mathfrak{d}\Delta))$ for bounded polytopes) \cite{bseh-12}.

Famously, Warren M. Hirsch asked if the diameter of a polyhedron with $f$ facets and dimension $\mathfrak{d}$ is at most $f - \mathfrak{d}$ \cite{d-63}; this statement is called the Hirsch conjecture. We call polyhedra whose diameter is bounded above by $f-\mathfrak{d}$ \emph{Hirsch satisfying}, and \emph{Hirsch violating} otherwise. The conjecture was first disproven for unbounded polyhedra \cite{kw-67} and much later for bounded polytopes \cite{s-11}. These counterexamples spurred interest into determining which classes of polyhedra are Hirsch satisfying. To date, several well-known classes of polytopes have been identified as Hirsch satisfying, including $0/1$-polytopes \cite{n-89},  network-flow polytopes \cite{bdf-17}, and the fractional stable-set polytope \cite{ms-14}.

In this paper, we study the diameters of $2$-sum polyhedra, which are those polyhedra whose constraint matrix can be decomposed as a $2$-sum of two matrices (Definition \ref{def:matrix_2sum}). Such polyhedra are encountered in both the theory and the application of linear and integer programming. 

The $2$-sum is a classical matroid operation and the matrix definition arises when applying the $2$-sum to represented matroids. This operation is used in the decomposition of regular matroids into graphic matroids, co-graphic matroids, and a special $10$-element matroid. Equivalently, this decomposition can be modified to decompose totally-unimodular matrices into network matrices and two special $5 \times 5$ matrices \cite{s-98,s-80}.

We also encounter the $2$-sum in time-staged decision making. Here, the operation represents the situation where the decision at stage $t+1$ depends on the decision at stage $t$ through a single shared resource. Time-staged decision making is a special case of ``staircase linear programming,'' which is known to be challenging for the classical simplex method. The $2$-sum operation has also been used to devise an efficient algorithm for solving bimodular integer programs, the special case of integer programming where the subdeterminants of the constraint matrix are contained in $\{ 0, \pm 1, \pm 2 \}$ \cite{awz-17}.

The $2$-sum operation is among the simplest of operations to join two polyhedra for which a good bound on the resulting diameter is unknown. For example, the diameter of a Cartesian product of two polyhedra is the sum of the individual diameters, $d(P \times Q) = d(P) + d(Q)$, and the diameter of the parallel-connection of two polyhedra, an operation from which the $2$-sum can be derived, is at most the sum of the diameters plus two \cite{bgl-23}. We will prove that the diameter of the $2$-sum of polyhedra is at most quadratic in the diameters of its inputs. 

We will construct an upper bound on the diameter of a $2$-sum polyhedron $P$ that depends on the diameters of polyhedra whose $2$-sum is $P$. Critically, the $2$-sum operation for polyhedra is not injective; there is a degree of freedom with respect to the right-hand side (see Subsection \ref{subsec:terminology}). For this reason, we phrase our bounds in terms of diameters over the \emph{class} of standard-form polyhedra that share the same equality constraint matrix but may have differing right-hand sides. This is not necessarily a limitation. For many classes of polyhedra (e.g., network-flow, dual-transportation), tight bounds on the diameter are known for the class, but not for the individual polyhedra in the class. 

TU polyhedra are those polyhedra that can be described with a totally-unimodular constraint matrix. The diameter of a $\mathfrak{d}$-dimensional TU polyhedron is at most $O(\mathfrak{d}^4\log (\mathfrak{d}))$ because $\Delta \leq 1$ for a TU matrix \cite{bseh-12}. In fact, many of the most well-studied classes of TU polyhedra are actually Hirsch satisfying, such as, network flow polytopes, assignment polytopes, and partition polytopes \cite{br-74,b-13,bdf-17}; however, it is a longstanding open question whether all TU polyhedra are Hirsch satisfying. In Section \ref{sec:2sum_bound}, we discuss an approach to reduce the proven quadratic diameter bound for the $2$-sum to a linear diameter bound. Proving that the diameter of the $2$-sum polyhedron is linear in the diameter of the inputs should be a key step in deriving a tighter diameter bound for TU polyhedra, or possibly showing that TU polyhedra are Hirsch satisfying.

\subsection{Outline}

In the remainder of Section \ref{sec:intro}, we formally define the {\em $2$-sum of polyhedra} and the {\em band of a vertex}, which will be a useful tool for proving our results. Throughout, we are interested in ``lifting'' walks to the $2$-sum from its summands. In Lemma  \ref{lem:band_step_criterion}, we prove that the band of a vertex yields a criterion for determining when a walk can be lifted.

We begin Section \ref{sec:2sum_bound} by discussing some immediate observations on the lifting of walks between polyhedra and derive bounds on the distance between some pairs of vertices of the $2$-sum. We use these observations for a constructive proof of a quadratic diameter bound for a $2$-sum polyhedron (Theorem \ref{thm:2sum_diam_bound}). Following the proof, we discuss how our bound possibly can be improved to be linear. Finally, we consider the addition of a unit column to the constraint matrix, which is equivalent to relaxing one equality constraint to inequality. We show that this operation can be interpreted as a special $2$-sum, and in this special case, we derive a linear bound on the resulting diameter. 

In Section \ref{sec:3sum}, we study how our methods can transfer to $3$-sum polyhedra, which we define as those polyhedra that can be represented with an equality constraint matrix that is a $3$-sum of two matrices. We discuss which of our tools and results transfer to $3$-sum polyhedra and which do not. We conclude with a brief outlook on some remaining open questions, in Section \ref{sec:outlook}.

\subsection{Notation and Terminology}\label{subsec:terminology}

Given two vertices $v, w \in P$, the distance between $v$ and $w$, $d_P(v, w)$, is the minimum number of edges needed to construct an edge walk from $v$ to $w$; we drop the subscripted $P$ when the polyhedron is clear from context. The diameter of $P$, $d(P)$, is the maximum of $d_P(v, w)$ over all pairs of vertices. 

It will be convenient to discuss the diameter of the class of polyhedra that share a constraint matrix but have different right-hand sides. Given a matrix $A \in \R^{m,n}$, we set $P(b) := \{ x : Ax = b, x \geq 0 \}$ and define $d(A) := \max \{ d(P(b)) : b \in \R^m \}$. Thus, the value $d(A)$ is the largest diameter of any standard-form polyhedron with constraint matrix $A$.

For $P = \{ x : Ax=b, x \geq 0\}$ the value $d(P)$ may be considerably smaller than $d(A)$, especially for a degenerate polyhedron. The value $d(A)$ is commonly studied when bounding the diameter of a \emph{class} of polyhedra with varying right-hand sides. For example, when $A$ is the node-arc incidence matrix of a digraph $G$, $d(A)$ is an upper bound on the diameter of any uncapacitated network-flow polytope with underlying graph $G$. Our bound for the diameter of a $2$-sum polyhedron is particularly useful when the polyhedron can be decomposed into polyhedra that belong to classes with well-understood diameters, e.g., a network-flow polytope and a dual-transportation polytope. In this case, we have that the diameter is at most quadratic in its associated Hirsch bound.

 We follow \cite{s-98} to define the $2$-sum operation for a pair of arbitrary matrices.

\begin{defi}[2-Sum of Matrices]\label{def:matrix_2sum}
Given two matrices $\begin{bmatrix} A & a \end{bmatrix}$ and $\begin{bmatrix}
        b \\
        B
    \end{bmatrix}$ with distinguished column $a$ and row $b$. The $2$-sum is
    \[
    \begin{bmatrix} 
        A & a 
    \end{bmatrix}
    \oplus_2
    \begin{bmatrix}
        b \\
        B
    \end{bmatrix}
    =
    \begin{bmatrix}
        A & ab \\
        0 & B \\
    \end{bmatrix}.
    \]
\end{defi}

The $2$-sum operation was designed to generalize its graph-theoretical case, which corresponds to a ``clique sum'' on a clique of size 2. Consider a pair of graphs $G_1$ and $G_2$, with a unique edge $e$ that lies in both graphs. To form the $2$-sum, we assign $e$ an orientation and glue $G_1$ and $G_2$ together along edge $e$, respecting the orientation, then we delete edge $e$. An example is given in Figure \ref{fig:2-sum_graph}.

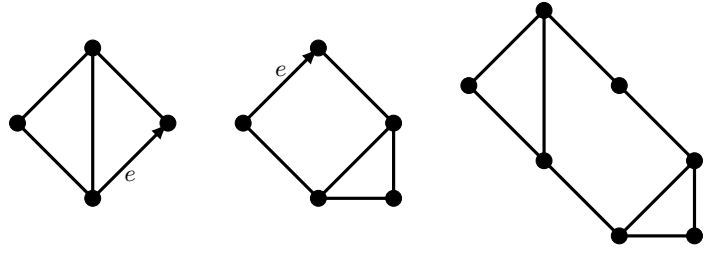
\begin{figure}
    \centering
    \begin{tikzpicture}[scale=1]

        \draw[very thick, black] (1,2)--(0,1)--(1,0)--(1,2)--(2,1);
        \draw[very thick, black, -latex] (1,0)--node[below] {$e$}(2,1);

        \draw[fill = black] (1, 0) circle (3pt);
        \draw[fill = black] (1, 2) circle (3pt);
        \draw[fill = black] (0, 1) circle (3pt);
        \draw[fill = black] (2, 1) circle (3pt);

        \draw[very thick, black] (4,2)--(5,1)--(5,0)--(4,0)--(3,1);
        \draw[very thick, black] (4,0)--(5,1);
        \draw[very thick, black, -latex] (3,1) --node[above] {$e$} (4,2);

        \draw[fill = black] (4, 2) circle (3pt);
        \draw[fill = black] (5, 1) circle (3pt);
        \draw[fill = black] (5, 0) circle (3pt);
        \draw[fill = black] (4, 0) circle (3pt);
        \draw[fill = black] (3, 1) circle (3pt);

        \draw[very thick, black] (6,1.5)--(7,2.5)--(8,1.5)--(9,0.5)--(9,-0.5)--(8,-0.5)--(7,0.5)--(6,1.5);
        \draw[very thick, black] (7,0.5)--(7,2.5);
        \draw[very thick, black] (8,-0.5)--(9,0.5);

        \draw[fill = black] (6, 1.5) circle (3pt);
        \draw[fill = black] (7, 2.5) circle (3pt);
        \draw[fill = black] (8, 1.5) circle (3pt);
        \draw[fill = black] (9, 0.5) circle (3pt);
        \draw[fill = black] (9, -0.5) circle (3pt);
        \draw[fill = black] (8, -0.5) circle (3pt);
        \draw[fill = black] (7, 0.5) circle (3pt);
    \end{tikzpicture}
    \caption{Graphs $G_1$ (left) and $G_2$ (center) with unique, oriented edge $e$, and $G_1 \oplus_2 G_2$ along edge $e$ (right).}
    \label{fig:2-sum_graph}
\end{figure}

We define the $2$-sum for standard-form polyhedra by performing the operation on the constraint matrices and making an adjustment to the right-hand side. \newpage 
\begin{defi} \label{def:2sum}
Given two standard-form polyhedra
\[
    P := \left\{ x : \begin{bmatrix}
        A & a\\
    \end{bmatrix}
    x
    =
    c_A, \, 
    x \geq 0
    \right\}, \qquad
    Q := \left\{ y : \begin{bmatrix}
        b \\
        B
    \end{bmatrix}
    y
    =
    \begin{bmatrix}
        c_b \\
        c_B
    \end{bmatrix}, \, 
    y \geq 0
    \right\},
\]
the $2$-sum of $P$ and $Q$ is 
\[
    P \oplus_2 Q :=  \left\{ \begin{bmatrix}
        x \\
        y
    \end{bmatrix} : \begin{bmatrix}
        A & ab \\
        0 & B \\
    \end{bmatrix}
    \begin{bmatrix}
        x \\
        y
    \end{bmatrix}
    =
    \begin{bmatrix}
        c_A + ac_b \\
        c_B
    \end{bmatrix}, \, 
    x, y \geq 0
    \right\}.
\]
\end{defi}

When transferring the definition of the $2$-sum to polyhedra, there is a degree of freedom for the choice of right-hand side. In Definition \ref{def:2sum}, our choice of right-hand side implies that if $(x, 0) \in P$ and $y \in Q$, then $(x, y) \in P \oplus_2 Q$. When decomposing an arbitrary $2$-sum polyhedron, one can choose the right-sides for $P$ and $Q$ to match our definition.

Throughout, we assume that $a \neq 0$ and $b \neq 0$. For a zero-column $a$, the polyhedron would have 
(half of) a lineality space and 
all vertices set the corresponding variable to $0$.
For a zero-row $b$, the constraint $by = c_b$ is either redundant or makes the associated system infeasible. 
Because $a \neq 0$ and $b \neq 0$, we have $ab \neq 0$. When $ab \neq 0$, $P \oplus_2 Q$ can be represented in the following form by applying row operations to the equality constraints. We have
\begin{equation} \label{eqn:2sum_reduced}
     P \oplus_2 Q =
     \left\{ \begin{bmatrix} x \\
        y
    \end{bmatrix} : \begin{bmatrix}
        A' & 0 \\
        a' & b \\
        0 & B \\
    \end{bmatrix}
    \begin{bmatrix}
        x \\
        y
    \end{bmatrix}
    =
    \begin{bmatrix}
        c_A' \\
        c \\
        c_B
    \end{bmatrix}, \, 
    x, y \geq 0
    \right\},
\end{equation}
for some $A', a', c'_A,$ and $c$. Thus, every $2$-sum polyhedron can be decomposed as the $2$-sum of polyhedra with the following forms

\begin{align} \label{eqn:2sum_decomp}
    P &:= \left\{ x : \begin{bmatrix}
        A' & 0\\
        a' & 1
    \end{bmatrix}
    \begin{bmatrix}
        x \\
        s
    \end{bmatrix}
    =
    \begin{bmatrix}
        c'_A \\
        c_{a'}
    \end{bmatrix}, \, 
    x, s \geq 0
    \right\}, \quad
    Q := \left\{ y : \begin{bmatrix}
        b \\
        B
    \end{bmatrix}
    y
    =
    \begin{bmatrix}
        c_b \\
        c_B
    \end{bmatrix}, \, 
    y \geq 0
    \right\},
 \end{align}
where $c = c_{a'} + c_b$. 

Throughout, we assume $P \oplus_2 Q$ is represented in the form of Equation \ref{eqn:2sum_reduced}. For notational convenience, we drop the apostrophes from $A'$, $a'$, $c'_A$ in Equation \ref{eqn:2sum_reduced} and set $\bar{A} = \binom{A}{a}$ and $\bar{B} = \binom{b}{B}$. We will extensively use the following two polyhedra 
\[
P_A := \{ x : Ax = c_A, x \geq 0 \}, \quad 
Q_B := \{ y : By = c_B, y \geq 0 \},
\] 
which drop the constraint rows for $a'$ and $b$, respectively.
We also refer to variants of (a face of) $P$ and $Q$ where the right-hand side is adjusted as such:
\begin{align*}
    P(y) &:= \left\{ x : \begin{bmatrix}
        A\\
        a
    \end{bmatrix}
    x
    =
    \begin{bmatrix}
        c_A \\
        c_a + c_b - by
    \end{bmatrix}, \, 
    x \geq 0
    \right\}, \\
    Q(x) &:= \left\{ y : \begin{bmatrix}
        b \\
        B
    \end{bmatrix}
    y
    =
    \begin{bmatrix}
        c_a + c_b - ax \\
        c_B
    \end{bmatrix}, \, 
    y \geq 0
    \right\}.
\end{align*}
Intuitively, for a given $y \in Q_B$, $P(y)$ is the set of $x$ such that $(x, y) \in P \oplus_2 Q$, and vice versa for a given $x \in P_A$ and $Q(x)$. 

We are interested in ``lifting'' walks in $P_A$, $Q_B$, $P(y)$, and $Q(x)$ to $P \oplus_2 Q$. That is, we show that in some cases, walks in the input polyhedra correspond to walks in $P \oplus_2 Q$. Recall that the \emph{support} of a vector is the set indexing its nonzero components. We define the lift of a walk formally as follows.

\begin{defi} \label{def:lift}
Suppose that $(x^1, y^1) \in P \oplus_2 Q$ is a vertex, and $x^1$ is a vertex of $P_A$. Let $x^1, \ldots, x^k \in P_A$ a sequence of vertices such that $x^i$ is adjacent to $x^{i+1}$ for $1 \leq i \leq k-1$. We say the walk $x^1, \ldots, x^k$ \textit{lifts} to $P \oplus_2 Q$ if, for each $i$, there exists $y^i$ such that $(x^i, y^i) \in P \oplus_2 Q$, and $y^i$ has the same support as $y^1$.
\end{defi}

If a path in $P_A$ lifts to $P \oplus_2 Q$, then the resulting sequence of vertices $(x^1, y^1), \ldots, (x^k, y^k)$ is an edge walk in $P \oplus_2 Q$. To see this, we note that $P_A$ is affinely isomorphic to the face of $P \oplus_2 Q$ formed by requiring the constraints $y_i \geq 0$ to be satisfied with equality for each $i$ such that $y_i^1 = 0$. Further, when $P \oplus_2 Q$ is simple, $\s(y^i) = \s(y^1)$ for each $1 \leq i \leq k$. If instead $\s(y^i) \subsetneq \s(y^1)$, then $(x^i, y^i)$ is a degenerate vertex, which contradicts the assumption of simplicity.

Often, we are interested in determining if a single step of a walk lifts. When the step does, we say that the step \emph{lifts successfully}. Otherwise, we say that the step \emph{fails to lift}. In either case, when the lift of a step to $(x,y)$ results in $(x',y')$, we say the step \emph{terminates} at $(x',y')$.

Indeed, if for every pair of vertices in $P \oplus_2 Q$, a path between the two vertices can be constructed via lifted paths, then the diameter of $P \oplus_2 Q$ can be bounded by a function of the diameters of $P_A$, $Q_B$, $P(y)$, and $Q(x)$ for some $x$ and $y$.

\subsection{The Band of a Vertex}

We seek to determine an upper bound on $d(P \oplus_2 Q)$ that depends solely on the constraint matrix of $P \oplus_2 Q$. We may assume $P \oplus_2 Q$ is simple; if $P \oplus_2 Q$ is not simple, then there exists a perturbation of the right-hand side such that the resulting polyhedron is simple and has diameter at least $d(P \oplus_2 Q)$ \cite{ykk-84}.
When $P \oplus_2 Q$ is simple, $P(y)$ and $Q(x)$ also are simple because they are faces of $P \oplus_2 Q$. We have a categorization of the vertices of $P \oplus_2 Q$. If $(x, y) \in P \oplus_2 Q$ is a vertex, then either $x \in P(y)$ is a vertex and $y$ lies on an edge of $Q_B$, or vice versa. If $x \in P_A$ is a vertex, we call $(x, y)$ an \emph{$\mathbf{x}$-vertex}. Otherwise, $y \in Q_B$ is a vertex and we call $(x, y)$ a \emph{$\mathbf{y}$-vertex}.

As stated above, when $(x, y)$ is an $\mathbf{x}$-vertex, $y \in Q_B$ lies on an edge of $Q_B$. This gives a ``degree of freedom'' that we will use to determine when a path in $P_A$ can be lifted to $P \oplus_2 Q$. We will call this degree of freedom a \textit{band} (Definition \ref{def:band}). Recall that the support of a vector $y$, $\s(y)$, is the set of indices such that $y_i \neq 0$.

\begin{defi} \label{def:band}
Let $(x^0, y^0) \in P \oplus_2 Q$ be an $\mathbf{x}$-vertex. The $\mathbf{x}$-band (with respect to $y^0$) is the set of values given by the following set:

\[
    \band_{\mathbf{x}}(y^0) := \{ c_a + c_b - bz : Bz = c_B, 
     \s(z) \subseteq \s(y^0), \, z \geq 0 \}.
\]
\end{defi}

We can define the $\mathbf{y}$-band for a $\mathbf{y}$-vertex analogously. Lemma \ref{lem:band_step_criterion} shows that the set $\band_{\mathbf{x}}(y)$ can be used to determine when a step lifts.

\begin{lem} \label{lem:band_step_criterion}
Let $(x, y) \in P \oplus_2 Q$ be an $\mathbf{x}$-vertex. Suppose that $x' \in P_A$ is an adjacent vertex to $x$. The step $x$ to $x'$ lifts to $P \oplus_2 Q$ if and only if $ax' \in \band_{\mathbf{x}}(y)$.
\end{lem}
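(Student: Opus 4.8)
The plan is to reduce both definitions to a statement purely about supports. Writing $P \oplus_2 Q$ in the reduced form of Equation~\ref{eqn:2sum_reduced}, a pair $(x',y')$ with $x'\in P_A$ lies in $P \oplus_2 Q$ exactly when $By'=c_B$, $y'\ge 0$, and $ax'+by'=c$, i.e.\ $by'=c-ax'$. Hence, by Definition~\ref{def:lift}, the step $x\to x'$ lifts if and only if there is a $y'$ with $By'=c_B$, $y'\ge 0$, $by'=c-ax'$, and $\s(y')=\s(y)$, whereas $ax'\in\band_{\mathbf{x}}(y)$ asserts exactly the same thing but only with $\s(y')\subseteq\s(y)$. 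The forward implication is therefore immediate: a witness $y'$ for the lift is a witness for the band with $z=y'$. The entire content of the lemma is to promote ``$\subseteq$'' to ``$=$'' in the converse.

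For the converse, I would suppose $ax'\in\band_{\mathbf{x}}(y)$ and fix $z\ge 0$ with $Bz=c_B$, $\s(z)\subseteq\s(y)$, and $c-bz=ax'$; the reduced form then gives $(x',z)\in P\oplus_2 Q$. I would first argue that $(x',z)$ is a \emph{vertex} of $P\oplus_2 Q$. Consider the face $F:=\{(u,v)\in P\oplus_2 Q : v_j=0 \text{ for all } j\notin\s(y)\}$, which contains both $(x,y)$ and $(x',z)$. Since $(x,y)$ is an $\mathbf{x}$-vertex, $y$ is a vertex of $Q(x)$, so (a vertex of a standard-form polyhedron has linearly independent constraint columns on its support) the columns of $\bar{B}$ indexed by $\s(y)$ are linearly independent; consequently the projection $(u,v)\mapsto u$ is injective on $F$ and carries $F$ affinely onto a sub-polyhedron of $P_A$. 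As $x'$ is a vertex of $P_A$, it is a vertex of that sub-polyhedron, so $(x',z)$ is a vertex of $F$, hence of $P\oplus_2 Q$. Because $P\oplus_2 Q$ is simple, $(x',z)$ is non-degenerate, and restricting to the face $\{x'\}\times Q(x')$ makes $z$ a non-degenerate vertex of $Q(x')$, so $|\s(z)|=\rank(\bar{B})$; the identical reasoning for $(x,y)$ gives $|\s(y)|=\rank(\bar{B})$. Thus $|\s(z)|=|\s(y)|$, and with $\s(z)\subseteq\s(y)$ we conclude $\s(z)=\s(y)$, so $y':=z$ witnesses that the step lifts.

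The step I expect to be the main obstacle is exactly the passage from the support inclusion ``$\s(z)\subseteq\s(y)$'' baked into $\band_{\mathbf{x}}(y)$ to the support equality demanded by Definition~\ref{def:lift}: a priori the band could be witnessed only by a $z$ with strictly smaller support, which is precisely the situation in which $(x',z)$ would be a degenerate vertex (as already observed before the lemma), and ruling this out is where simplicity enters. The resolution is the two-stage promotion above --- first promote the feasible point $(x',z)$ to a vertex, then use non-degeneracy to promote its support to $\s(y)$. The remaining points are routine: verifying that $F$ and $\{x'\}\times Q(x')$ are genuinely faces of $P\oplus_2 Q$, which is where one uses that $x'$ is a \emph{vertex} of $P_A$ rather than merely a point of it, and the rank bookkeeping for $\bar{B}$, including the harmless corner case in which the middle row $(a\ b)$ is redundant.
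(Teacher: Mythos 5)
Your proposal is correct and follows essentially the same route as the paper: the forward direction reads the lift witness as a band witness, and the converse takes the band witness $z$, observes $(x',z)\in P\oplus_2 Q$, and uses simplicity of $P\oplus_2 Q$ (via the face determined by $\s(y)$) to conclude that $(x',z)$ is a nondegenerate vertex whose $y$-support must equal $\s(y)$. The only difference is one of detail: you make explicit the injective-projection argument for vertexhood and the rank counting that promotes $\s(z)\subseteq\s(y)$ to equality, steps the paper compresses into its remark that a feasible point of strictly smaller support would yield a degenerate vertex.
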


\begin{proof}
If the step $x$ to $x'$ lifts, then there exists $y'$ such that $(x', y') \in P \oplus_2 Q$ and, by simplicity of $P \oplus_2 Q$, $y'$ has the same support as $y$. Also, $By' = c_B$ and $y' \geq 0$ because $(x', y') \in P \oplus_2 Q$. It follows that $ax' = c_a + c_b - by' \in \band_{\mathbf{x}}(y)$.

Now, suppose that $ax' \in \band_{\mathbf{x}}(y)$. By definition of $\band_{\mathbf{x}}(y)$, there exists some $y'$ such that $By' = c_B$, $\s(y') \subseteq \s(y)$, $y \geq 0$, and $ax' = c_a + c_b - by'$. Therefore, $(x', y') \in P \oplus_2 Q$. By simplicity of $P \oplus_2 Q$, the point $(x', y')$ is a vertex---the existence of a feasible point with support strictly contained in the support of $(x', y')$ would imply the existence of a degenerate vertex. Finally, $(x', y')$ and $(x, y)$ are adjacent because $x$ and $x'$ are adjacent in $P_A$ and $(x, y)$, $(x', y')$ are contained on a face of $P \oplus_2 Q$ isomorphic to $P_A$.
\qed
\end{proof}

Lemma \ref{lem:band_step_criterion} can be reformulated to state that if $(x, y)$ is an $\mathbf{x}$-vertex and $x' \in P_A$ is adjacent to $x$, then there exists a $y'$ with $\s(y') \subseteq \s(y)$, such that $(x', y') \in P \oplus_2 Q$ is an adjacent vertex of $(x, y)$ if and only if $ax' \in \band_{\mathbf{x}}(y)$.

Consider an $\mathbf{x}$-vertex $(x, y)$ and a vertex $x' \in P_A$ adjacent to $x$. If the step from $x$ to $x'$ fails to lift to $(x, y)$, we use the fact that $y$ is on an edge of $Q_B$ to determine the vertex at which the step terminates.

\begin{lem} \label{lem:out_of_band}
If $(x, y) \in P \oplus_2 Q$ is an $\mathbf{x}$-vertex, then $y$ lies on an edge of $Q_B$, denoted $\operatorname{conv}(y^1, y^2)$. If vertex $x^1 \in P_A$ is adjacent to $x$ and the step $x$ to $x^1$ fails to lift to $P \oplus_2 Q$, then the resulting vertex is $(x', y^i)$ for some $x' \in \operatorname{conv}(x, x^1)$ and $i \in \{ 1, 2 \}$.
\end{lem}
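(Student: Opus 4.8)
The plan is to exploit the one-dimensional degree of freedom in the $y$-coordinate that comes with an $\mathbf{x}$-vertex, and to track what happens to the point $(x,y)$ as we move $x$ along the edge of $P_A$ toward $x^1$. First I would set up the parametrization: write points on the edge $\operatorname{conv}(x,x^1)$ as $x(t) = (1-t)x + t x^1$ for $t \in [0,1]$, and consider the value $a x(t) = (1-t)(ax) + t(ax^1)$, which traces out a line segment of scalars. The band $\band_{\mathbf{x}}(y)$, being the image of the polytope $\{z : Bz = c_B,\ \s(z)\subseteq\s(y),\ z\ge 0\}$ under the affine map $z \mapsto c_a + c_b - bz$, is a compact interval of reals (it is one-dimensional because $y$ lies on an edge of $Q_B$, hence the slice of $Q_B$ with support contained in $\s(y)$ is exactly that edge $\operatorname{conv}(y^1,y^2)$); call it $[\beta_1,\beta_2]$ with $\beta_i = c_a + c_b - b y^i$. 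Since $(x,y)\in P\oplus_2 Q$, we have $ax = c_a + c_b - by \in [\beta_1,\beta_2]$.

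Next I would invoke Lemma~\ref{lem:band_step_criterion}: the step from $x$ to $x^1$ lifts exactly when $ax^1 \in [\beta_1,\beta_2]$; since we are in the failure case, $ax^1 \notin [\beta_1,\beta_2]$. Hence, moving $t$ from $0$ to $1$, the scalar $ax(t)$ starts inside the interval and ends outside it, so by continuity (and linearity) there is a last parameter $t^\star \in [0,1)$ at which $ax(t)$ equals an endpoint $\beta_i$ of the band, say $\beta_i = b y^i$-adjusted, corresponding to one of the two endpoints $y^1$ or $y^2$ of the edge of $Q_B$. Set $x' := x(t^\star)$; then $a x' = c_a + c_b - b y^i$, so $(x', y^i)$ satisfies the equality system of $P\oplus_2 Q$ and is nonnegative, hence lies in $P\oplus_2 Q$. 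By simplicity of $P\oplus_2 Q$ (and therefore of the face isomorphic to $P_A$ that contains the edge $\operatorname{conv}(x,x^1)$), the point $(x',y^i)$ is the vertex at which the lifted step gets "stuck'': for $t$ slightly beyond $t^\star$, the scalar $ax(t)$ leaves the band, so no $y$ with $\s(y)\subseteq \s(y^i\text{-type support})$ keeps the point feasible, and the walk terminates there. This matches the definition of where a step terminates from the paragraph preceding the statement.

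The main obstacle, and the place I would spend the most care, is arguing cleanly that $(x',y^i)$ is genuinely a \emph{vertex} of $P\oplus_2 Q$ (not merely a feasible point) and that it is the correct termination point in the sense defined just before the lemma. This needs the simplicity assumption and the identification of $\operatorname{conv}(x,x^1)$ with an edge-or-face of $P\oplus_2 Q$: one shows that at $t^\star$ a new constraint ($y_j \ge 0$ for the index $j$ that the edge of $Q_B$ "uses up'' as it reaches the endpoint $y^i$) becomes active, so the number of active constraints jumps to make the point a vertex, exactly as in the proof of Lemma~\ref{lem:band_step_criterion}. A secondary subtlety is confirming that $\band_{\mathbf{x}}(y)$ really is the one-dimensional interval $[b y^1, b y^2]$-adjusted rather than something higher-dimensional; this follows because $y$ being on an edge of $Q_B$ forces $\{z : Bz=c_B,\ \s(z)\subseteq\s(y),\ z\ge 0\}$ to equal that edge $\operatorname{conv}(y^1,y^2)$, so its affine image is a segment with the claimed endpoints. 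Everything else is a routine continuity/linearity argument along the edge.
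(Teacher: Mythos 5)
Your handling of the second claim (where a failed step terminates) is sound and in fact more explicit than the paper's own treatment: parametrizing the edge as $x(t)=(1-t)x+tx^1$, noting that $ax$ lies in $\band_{\mathbf{x}}(y)$ while $ax^1$ does not (Lemma~\ref{lem:band_step_criterion}), and locating the unique $t^\star$ where $ax(t)$ hits an endpoint of the band is exactly the intended mechanism, and your appeal to simplicity to see that the resulting point is a vertex matches the paper's (equally terse) reasoning.

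The genuine gap is in the first half of the lemma. The statement that $y$ lies on an edge of $Q_B$ --- and hence that $\{ z : Bz = c_B,\ \s(z) \subseteq \s(y),\ z \geq 0 \}$ is the segment $\operatorname{conv}(y^1,y^2)$, so that $\band_{\mathbf{x}}(y)$ is a one-dimensional interval with endpoints $c_a+c_b-by^i$ --- is itself a \emph{conclusion} of the lemma, yet you invoke it as a hypothesis at both places where your argument needs one-dimensionality (``it is one-dimensional because $y$ lies on an edge of $Q_B$,'' and again in your ``secondary subtlety''). Nothing in your write-up rules out that this support-restricted slice of $Q_B$ is two- or higher-dimensional, so the proof begs the very claim it must establish. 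The paper closes this in two steps: first, $y$ is a vertex of $Q(x)$, since a feasible $\bar y \in Q(x)$ with $\s(\bar y) \subsetneq \s(y)$ would make $(x,\bar y)$ feasible and contradict that $(x,y)$ is a vertex of $P \oplus_2 Q$; second, with $I=\s(y)$, vertexhood of $y$ in $Q(x)$ forces $\bar B_I$ to have full rank, and deleting the single row $b$ drops the rank by exactly one, so $\ker(B_I)$ is one-dimensional and $\{ z : B_I z = c_B,\ z \geq 0 \}$ is precisely an edge $\operatorname{conv}(y^1,y^2)$ with $y^1,y^2$ the minimizer and maximizer of $bz$ over it. That rank argument is what licenses your interval $[\beta_1,\beta_2]$ and, just as importantly, identifies its endpoints with \emph{vertices} $y^1,y^2$ of $Q_B$, which is what makes the termination point $(x',y^i)$ a $\mathbf{y}$-vertex as used later in the paper. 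You need to supply this (or an equivalent) argument before the rest of your proof can stand.
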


\begin{proof}
We claim $y \in Q(x)$ is a vertex. Suppose that $y$ is not a vertex. Then, there exists a vertex $\bar{y} \in Q(x)$ with $\s(\bar{y}) \subsetneq \s(y)$. Further, $(x, \bar{y}) \in P \oplus_2 Q$, which contradicts that $(x, y)$ is a vertex of $P \oplus_2 Q$.

Let $I$ index the support of $y$, let $\bar{B}_I$ (and $B_I$) be the submatrix of $\bar{B}$ (and $B$) with columns indexed by $I$, respectively. Suppose that $\bar{B}_I$ has $m$ rows; by irredundancy, $\operatorname{rank}(\bar{B}_I) = m$. Because $B_I$ is obtained by deleting a single row of $\bar{B}_I$, it follows from irredundancy that $\operatorname{rank}(B_I) = m-1$, and thus, $\operatorname{ker}(B_I)$ is a $1$-dimensional subspace. Consequently, $y$ lies on the edge specified by $\{ z : B_Iz = c_B, z \geq 0 \} = \operatorname{conv}(y^1, y^2)$ where $y^1 := \operatorname{argmin} \{ bz : B_Iz = c_B, z \geq 0 \}$ and $y^2 := \operatorname{argmax} \{ bz : B_Iz = c_B, z \geq 0 \}$.

Suppose that $x^1 \in P_A$ is an adjacent vertex of $x$ and the step $x$ to $x^1$ fails to lift. By Lemma \ref{lem:band_step_criterion}, $ax^1 \notin \band_{\mathbf{x}}(y)$. If $ax^1 > c_a + c_b - by^1$, then lifting the step will terminate at some vertex $(x', y^1)$. It follows that $y^1 \in Q_B$ is a vertex, hence $(x', y^1)$ is a $\mathbf{y}$-vertex. Similarly, if $ax^1 < c_a + c_b - by^2$ then $(x', y^2)$ is a $\mathbf{y}$-vertex.
\qed
\end{proof}

We use Lemmas \ref{lem:band_step_criterion} and \ref{lem:out_of_band} to determine when walks in some input polyhedra lift to walks in the $2$-sum. In turn, we establish a diameter bound for $P \oplus_2 Q$ that depends on the diameter of the inputs.

\section{A Quadratic Diameter Bound for the 2-Sum}\label{sec:2sum_bound}

We present a quadratic diameter bound for $P \oplus_2 Q$. Before we prove the main theorem, we first prove three lemmas. Each lemma gives distance bounds between a subset of the pairs of vertices. The bound for $d(P \oplus_2 Q)$ is presented in Theorem \ref{thm:2sum_diam_bound}. 

First, we show that if two $\mathbf{x}$-vertices have the same $x$-coordinates, then the distance between the two vertices is at most $d(\bar{B})$.

\begin{lem}\label{lem:m-path}
    If $(x,y), (x,y') \in P \oplus_2 Q$ are both $\mathbf{x}$-vertices, then 
    \[
        d((x,y), (x,y')) \leq d(\bar{B}).
    \]
\end{lem}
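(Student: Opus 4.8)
The plan is to move from $(x,y)$ to $(x,y')$ while keeping the $x$-coordinate fixed, so that the entire walk lives inside the ``slice'' $\{x\}\times Q(x)$ of $P\oplus_2 Q$. Since $(x,y)$ and $(x,y')$ are both $\mathbf{x}$-vertices with the same $x$, both $y$ and $y'$ satisfy $By=By'=c_B$, $y,y'\ge 0$, and $by=by'=c_a+c_b-ax$; that is, both $y$ and $y'$ are feasible points of the polyhedron $Q(x)=\{z: \bar B z = (c_a+c_b-ax,\,c_B)^\top,\ z\ge 0\}$. First I would observe that $Q(x)$ is (affinely isomorphic to) a face of $P\oplus_2 Q$ — namely the face obtained by fixing all coordinates of $x$ — so a walk between two vertices of $Q(x)$ lifts coordinate-wise to a walk of the same length between $(x,y)$ and $(x,y')$ in $P\oplus_2 Q$ (this is exactly the face-isomorphism argument already used in the discussion after Definition~\ref{def:lift} and in the proof of Lemma~\ref{lem:band_step_criterion}).

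The remaining point is that $y$ and $y'$ really are \emph{vertices} of $Q(x)$, so that $d_{Q(x)}(y,y')$ makes sense and the walk starts and ends where we want. This is precisely the argument opening the proof of Lemma~\ref{lem:out_of_band}: if $y$ were not a vertex of $Q(x)$, there would be a feasible $\bar y\in Q(x)$ with $\s(\bar y)\subsetneq \s(y)$, and then $(x,\bar y)\in P\oplus_2 Q$ would witness that $(x,y)$ is degenerate, contradicting simplicity of $P\oplus_2 Q$ (which we may assume by the perturbation argument of \cite{ykk-84}). The same applies to $y'$.

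Next I would bound $d_{Q(x)}(y,y')$ by $d(\bar B)$. The polyhedron $Q(x)$ has constraint matrix $\bar B = \binom{b}{B}$ and some right-hand side depending on $x$; by the definition $d(\bar B) = \max\{d(Q(b')) : b'\in\R^m\}$ over all right-hand sides, we get $d(Q(x)) \le d(\bar B)$ immediately, hence $d_{Q(x)}(y,y')\le d(\bar B)$. Combining the three steps: $d_{P\oplus_2 Q}((x,y),(x,y')) \le d_{Q(x)}(y,y') \le d(\bar B)$.

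I do not anticipate a serious obstacle here; the lemma is essentially bookkeeping built on facts already established. The one place requiring a little care is making the face-isomorphism precise: one should check that fixing the $x$-block in the system $\begin{bmatrix} A' & 0\\ a' & b\\ 0 & B\end{bmatrix}$ leaves exactly the constraints $\bar B y = (c,\,c_B)^\top$ active on $y$ (the rows involving $A'$ become vacuous, and the single mixed row $a'x + by = c$ becomes $by = c - a'x$, which together with $c = c_{a'}+c_b$ and $a'x = c_{a'}$-type accounting reproduces the right-hand side in the definition of $Q(x)$), and that adjacency in this face coincides with adjacency in $Q(x)$ — but this is the routine verification underlying Definition~\ref{def:lift} and needs no new idea.
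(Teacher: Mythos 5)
Your proposal is correct and follows essentially the same route as the paper: establish that $y,y'$ are vertices of $Q(x)$ via the support/degeneracy argument from Lemma~\ref{lem:out_of_band}, take a shortest path in $Q(x)$ of length at most $d(Q(x))\le d(\bar{B})$, and lift it with the $x$-block held fixed (the paper verifies vertexhood of each $(x,y^i)$ and adjacency directly, which is the same content as your face-isomorphism packaging). The only nitpick is phrasing: the face is obtained by imposing $x_j=0$ for $j\notin\s(x)$, and it is because $x$ is a vertex of $P_A$ that this forces the $x$-block to equal $x$, so that the face is exactly $\{x\}\times Q(x)$.
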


\begin{proof}
From the proof of Lemma \ref{lem:out_of_band}, both $y, y'$ are vertices of $Q(x)$. Let $y=y^0$, $y^1$, $\ldots$, $y^k=y'$ denote a shortest path between $y$ and $y'$ in $Q(x)$. It follows that $k \leq d(Q(x)) \leq d(\bar{B})$. We claim that for each $0 \leq i \leq k$, $(x, y^i) \in P \oplus_2 Q$ is a vertex. It is easy to check that $(x, y^i) \in P \oplus_2 Q$. Further, if $(x, y^i)$ is not a vertex, then there exists $(\hat{x}, \hat{y})$ with $\s(\hat{x}, \hat{y}) \subsetneq \s (x, y^i)$. However, $x \in P_A$ is a vertex, so $\hat{x} = x$. Therefore, $\s(\hat{y}) \subsetneq \s(y^i)$, and thus $y^i$ is not a vertex; a contradiction. We note that for each $0 \leq i < k$, the vertices $(x, y^i)$ and $(x, y^{i+1})$ are adjacent, which follows from the adjacency of $y^i$ and $y^{i+1}$ in $Q$. We conclude that $d((x,y), (x,y')) \leq k \leq d(\bar{B})$. 
\qed
\end{proof}

The proof can be readily adapted to show that if $(x, y)$ and $(x', y)$ are two $\mathbf{y}$-vertices then $d((x,y), (x',y)) \leq d(\bar{A})$. Next, we show that a $\mathbf{y}$-vertex is always ``close'' to an $\mathbf{x}$-vertex.

\begin{lem} \label{lem:change_cat}
Suppose that $P \oplus_2 Q$ contains both $\mathbf{x}$- and $\mathbf{y}$-vertices. If $(x^1, y^1)$ is a $\mathbf{y}$-vertex, then there exists an $\mathbf{x}$-vertex, $(x',y')$, such that 
\[
d((x^1, y^1), (x', y')) \leq d(\bar{A}) + d(Q_B) + 1.
\]
\end{lem}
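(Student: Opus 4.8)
The plan is to split the required walk into two parts: first a walk inside a face of $P\oplus_2 Q$ that is affinely isomorphic to some $P(\cdot)$ (costing at most $d(\bar A)$), then a walk inside $Q_B$ (costing at most $d(Q_B)$), arranging the first walk so that the second is \emph{forced} to leave the $\mathbf{y}$-band and hence, by (the $\mathbf{y}$-analogue of) Lemma~\ref{lem:out_of_band}, to terminate at an $\mathbf{x}$-vertex. Put $H:=\{z:az=ax^1\}$, and note $ax^1=c_a+c_b-by^1$ since $(x^1,y^1)\in P\oplus_2 Q$. Because $(x^1,y^1)$ is a $\mathbf{y}$-vertex, $y^1$ is a vertex of $Q_B$ and $(x^1,y^1)$ is a vertex of the face cut out by $y_j=0$ for $j\notin\s(y^1)$, which (as $y^1$ is a vertex of $Q_B$) is affinely isomorphic to $P(y^1)=P_A\cap H$; so $x^1$ is a vertex of $P(y^1)$. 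A vertex of $P_A\cap H$ that is not a vertex of $P_A$ lies in the relative interior of an edge of $P_A$ crossing $H$, so $x^1\in\operatorname{relint}(E)$ for an edge $E=\operatorname{conv}(x^a,x^b)$ of $P_A$ with $ax^a<ax^1<ax^b$. If $H$ contains a vertex $x^*$ of $P_A$, then $(x^*,y^1)$ is an $\mathbf{x}$-vertex and $x^*$ is a vertex of $P(y^1)$, so a walk from $x^1$ to $x^*$ inside the face $\{y=y^1\}$ reaches it in at most $d(P(y^1))\le d(\bar A)$ steps; so assume $H$ contains no vertex of $P_A$. Fix any $\mathbf{x}$-vertex $(\hat x,\hat y)$; then $\hat x$ is a vertex of $P_A$, so $a\hat x\neq ax^1$, and by the symmetry interchanging the two sides of $H$, the endpoints $x^a,x^b$, and $\max/\min$ over $Q_B$ below, we may assume $a\hat x<ax^1$, i.e.\ $b\hat y>by^1$.

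\emph{Phase 1.} Suppose first $M:=\max\{by:y\in Q_B\}$ is finite. Let $x^\dagger$ be a vertex of $P_A$ maximizing $ax$ subject to $ax<ax^1$; since $\hat x$ competes, $ax^\dagger\ge a\hat x=c_a+c_b-b\hat y\ge c_a+c_b-M$. As $ax$ is linear on $P_A$ and some vertex (e.g.\ $x^b$) has $ax>ax^1\geq ax^\dagger$, the vertex $x^\dagger$ is joined to a vertex $u$ with $au>ax^1$ (necessarily $>ax^1$, since $au\geq ax^1$ by maximality and $H$ avoids vertices) by an edge $E^\dagger$ of $P_A$ crossing $H$ (if $P_A$ is unbounded this ``edge'' may be a ray, handled analogously). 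Then $v^\dagger:=E^\dagger\cap H$ is a vertex of $P(y^1)$, strictly between $x^\dagger$ and $u$ on $E^\dagger$, with $ax^\dagger<av^\dagger=ax^1<au$. Walking from $x^1$ to $v^\dagger$ inside the face $\{y=y^1\}$ costs at most $d(P(y^1))\le d(\bar A)$ steps and lands at the $\mathbf{y}$-vertex $(v^\dagger,y^1)$.

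\emph{Phase 2.} At $(v^\dagger,y^1)$ the $\mathbf{y}$-band (with respect to $v^\dagger$) is the interval of $by$-values $\band_{\mathbf{y}}(v^\dagger)=[\,c_a+c_b-au,\ c_a+c_b-ax^\dagger\,]$, and $by^1=c_a+c_b-av^\dagger$ lies strictly inside it. Walk $y$ from $y^1$ to a vertex $y^M$ of $Q_B$ with $by^M=M$ along a shortest path in $Q_B$ (at most $d(Q_B)$ steps), lifting each step from $(v^\dagger,y^1)$ via (the $\mathbf{y}$-analogue of) Lemma~\ref{lem:band_step_criterion}; as long as successive steps lift, the $x$-coordinate slides within the relative interior of $E^\dagger$ and the band does not change. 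But $by^M=M\ge c_a+c_b-ax^\dagger$ meets or exceeds the upper endpoint of the band while $by^1$ is strictly below it, so lifting cannot succeed at every step; at the first step that does not lift, Lemma~\ref{lem:out_of_band} (in its $\mathbf{y}$-form) terminates the step at a vertex whose $x$-coordinate is an endpoint of $E^\dagger$, i.e.\ a vertex of $P_A$, hence an $\mathbf{x}$-vertex (and if the $x$-coordinate instead reaches $x^\dagger$ exactly at $y^M$, we likewise arrive at the $\mathbf{x}$-vertex $(x^\dagger,y^M)$). This reaches an $\mathbf{x}$-vertex within $d(\bar A)+d(Q_B)$ steps of $(x^1,y^1)$. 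In the remaining case $\sup\{by:y\in Q_B\}=+\infty$, skip Phase 1: from $(x^1,y^1)$ walk $y$ along a shortest path in $Q_B$ to a vertex from which $Q_B$ has an unbounded edge-ray along which $by$ increases (at most $d(Q_B)$ steps, lifting from $(x^1,y^1)$; a failed lift en route already lands at an $\mathbf{x}$-vertex), then take one more step out along that ray: since $by\to+\infty$ the lift fails and terminates at an $\mathbf{x}$-vertex, for a total of at most $d(Q_B)+1$ steps. In all cases we reach an $\mathbf{x}$-vertex within $d(\bar A)+d(Q_B)+1$ steps, the claimed bound.

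The step I expect to be the main obstacle is making precise that Phase 2 is genuinely forced out of the band: this hinges on the inequality $ax^\dagger\ge c_a+c_b-M$ (which is what lets $M$ meet or overtake the upper band endpoint $c_a+c_b-ax^\dagger$), on the choice of $x^\dagger$ and of the extreme vertex $y^M$, and on their mirror images in the symmetric case. One must also state and apply the $\mathbf{y}$-versions of Lemmas~\ref{lem:band_step_criterion} and~\ref{lem:out_of_band} carefully, and treat the unbounded situations (an unbounded $P_A$, in which the crossing ``edge'' $E^\dagger$ from $x^\dagger$ may be a ray, and an unbounded $Q_B$, which is exactly where the additive $+1$ is used).
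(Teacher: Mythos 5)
Your overall strategy is genuinely different from the paper's. The paper's proof is much shorter: since both categories of vertices are nonempty and the vertex--edge graph of $P \oplus_2 Q$ is connected, there is an adjacent pair consisting of a $\mathbf{y}$-vertex $(x^2,y^2)$ and an $\mathbf{x}$-vertex $(x^3,y^3)$; one lifts a $Q_B$-path from $y^1$ to $y^2$ (a failed lift already terminates at an $\mathbf{x}$-vertex within $d(Q_B)$ steps), applies the $\mathbf{y}$-analogue of Lemma~\ref{lem:m-path} to get from $(\hat x^1,y^2)$ to $(x^2,y^2)$ in at most $d(\bar A)$ further steps, and spends the final $+1$ on the edge to $(x^3,y^3)$. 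No extremization of $by$ and no boundedness case analysis is needed there. Your bounded case, by contrast, forces a band exit by steering $by$ toward its maximum after repositioning the $x$-coordinate onto an edge incident to $x^\dagger$; that argument is essentially sound (and, as a small repair, you do not need the upper endpoint $x^b$ to exist: non-optimality of $x^\dagger$ for $\max\, ax$ over $P_A$ already follows from $x^1\in P_A$ with $ax^1>ax^\dagger$, so the improving edge or ray at $x^\dagger$ crossing $H$ exists even when the face of $P_A$ containing $x^1$ is a ray with no upper vertex).

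The unbounded case as written, however, has a genuine gap. When $\sup\{by: y\in Q_B\}=+\infty$ you skip Phase 1 and assert that a step along an improving ray of $Q_B$ must fail to lift ``since $by\to+\infty$.'' This fails precisely when the minimal one-dimensional face $E$ of $P_A$ containing $x^1$ is itself an unbounded ray along which $ax$ is unbounded below: then the relevant band is $[\,c_a+c_b-ax^a,\,+\infty)$ for the unique vertex endpoint $x^a$ of $E$, i.e.\ unbounded above, so every step of your $y$-walk and the final ray step can lift; the point $(x,y)$ simply slides along an unbounded edge of $P\oplus_2 Q$ and no $\mathbf{x}$-vertex is ever reached. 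This configuration is compatible with all your hypotheses (including the existence of an $\mathbf{x}$-vertex). The fix lives inside your own framework: Phase 1 nowhere uses finiteness of $M$, so run it in the unbounded case as well; afterwards the $x$-coordinate lies on $E^\dagger$, whose minimum of $ax$ is attained at the genuine vertex $x^\dagger$, so the band is bounded above by $c_a+c_b-ax^\dagger$, and walking $y$ either to a vertex of $Q_B$ with $by$ above this value or to the base vertex of an improving extreme ray plus one ray step forces the exit, for a total of $d(\bar A)+d(Q_B)+1$. You would also need to state and verify the ray versions of Lemmas~\ref{lem:band_step_criterion} and~\ref{lem:out_of_band} that you invoke, since the paper proves them only for steps between adjacent vertices.
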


\begin{proof}
Because $P \oplus_2 Q$ contains both $\mathbf{x}$- and $\mathbf{y}$-vertices and every vertex is either an $\mathbf{x}$-vertex or  $\mathbf{y}$-vertex, there exists a pair of adjacent vertices $(x^2, y^2)$ and $(x^3, y^3)$ such that $(x^2, y^2)$ is a $\mathbf{y}$-vertex and $(x^3, y^3)$ is an $\mathbf{x}$-vertex. As $y^1, y^2$ are vertices of $Q_B$, there exists a path from $y^1$ to $y^2$ in $Q_B$ with length at most $d(Q_B)$. If the path lifts successfully at $(x^1, y^1)$, then there exists $\hat{x}^1$ with $\s(\hat{x}^1) \subseteq \s(x^1)$ such that $(\hat{x}^1, y^2) \in P \oplus_2 Q$ is a vertex. Further, $d((x^1, y^1), (\hat{x}^1, y^2)) \leq d(Q_B)$. Next, we note that because $\hat{x}^1, x^2 \in P(y^2)$ are vertices, Lemma \ref{lem:m-path} implies that the distance between $(\hat{x}^1, y^2)$ and $(x^2, y^2)$ is at most $d(\bar{A})$. Because $(x^2, y^2)$ is adjacent to $(x^3, y^3)$, it follows that $d((x^1, y^1), (x^3, y^3)) \leq d(\bar{A}) + d(Q_B) + 1$.

If a step in the path from $y^1$ to $y^2$ fails to lift, then the vertex resulting from the first failed step is an $\mathbf{x}$-vertex. Thus, the distance between $(x^1, y^1)$ and an $\mathbf{x}$-vertex is at most $d(Q_B) \leq d(\bar{A}) + d(Q_B) + 1$.
\qed
\end{proof}

Lemma \ref{lem:change_cat} shows that every vertex is close to a vertex in the opposite category. For the remainder, we consider vertices in the same category. Now, we consider the case where $(x, y), (x', y')$ are both $\mathbf{x}$-vertices and $ax' \in \band_{\mathbf{x}}(y)$. In the proof of Lemma \ref{lem:in_band}, we explicitly construct a path between the two vertices; Figure \ref{fig:in_band} provides an illustration of this path. \newpage 

\begin{lem} \label{lem:in_band}
Let $(x, y), (x', y') \in P \oplus_2 Q$ be $\mathbf{x}$-vertices. If $ax' \in \band_{\mathbf{x}}(y)$, then 
\[
    d((x, y), (x', y')) \leq d(P_A) + 2d(\bar{A}) + d(\bar{B}) + 2.
\]
\end{lem}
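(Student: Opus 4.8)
The plan is to build the path from $(x,y)$ to $(x',y')$ in three conceptual stages: first move $y$ to the ``right'' right-hand side inside $Q_B$, then rectify the $x$-coordinate inside a copy of $P_A$, then clean up with a short $\mathbf{x}$-band walk. Concretely, since $ax' \in \band_{\mathbf{x}}(y)$, there exists $z$ with $Bz = c_B$, $\s(z) \subseteq \s(y)$, $z \ge 0$, and $ax' = c_a + c_b - bz$; so $(x', z) \in P \oplus_2 Q$. This is the anchor vertex: it has the correct $x$-coordinate $x'$ and a $y$-coordinate $z$ that is ``band-compatible'' with the starting vertex. The strategy is to route from $(x,y)$ to $(x', z)$ and then from $(x', z)$ to $(x', y')$, and to bound each leg.

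First I would handle the leg from $(x', z)$ to $(x', y')$. Both are points of $P \oplus_2 Q$ with the same $x$-coordinate $x'$, which is a vertex of $P_A$; after passing to a vertex of $Q(x')$ dominated by $z$ (as in the proof of Lemma~\ref{lem:m-path}, using simplicity to argue $(x',z)$ is already a vertex, or moving to one), Lemma~\ref{lem:m-path} gives distance at most $d(\bar B)$ between the corresponding $\mathbf{x}$-vertices; this consumes the $d(\bar B)$ term. For the leg from $(x,y)$ to $(x',z)$: $x$ and $x'$ are both vertices of $P_A$, so there is a path $x = x^0, x^1, \ldots, x^k = x'$ in $P_A$ with $k \le d(P_A)$. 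I would attempt to lift this path starting at $(x,y)$. The key point is that $ax' \in \band_{\mathbf{x}}(y)$ guarantees the \emph{endpoint} is reachable, but intermediate steps $x^i$ may have $ax^i \notin \band_{\mathbf{x}}(y)$ and thus fail to lift. When a step fails, Lemma~\ref{lem:out_of_band} tells us the walk terminates at a $\mathbf{y}$-vertex $(x'', y^j)$ with $x'' \in \operatorname{conv}(x^i, x^{i+1})$; from such a $\mathbf{y}$-vertex, Lemma~\ref{lem:change_cat} returns us to an $\mathbf{x}$-vertex within $d(\bar A) + d(Q_B) + 1$ — but that is too expensive if it can happen many times, so the real argument must show it happens at most a bounded number of times (I expect: at most twice, once for each of the two band endpoints $y^1,y^2$), after which the band has ``widened enough'' that the remaining subpath lifts.

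The main obstacle is precisely this control on how often a lift can fail. The intended mechanism should be: once a step fails ``upward'' (i.e. $ax^{i+1} > c_a + c_b - by^1$), we land at a $\mathbf{y}$-vertex whose $y$-coordinate is a vertex of $Q_B$; from there we are in the situation of an $\mathbf{x}$-band/$\mathbf{y}$-band dichotomy, and I would argue that after re-entering the $\mathbf{x}$-category via at most one application of Lemma~\ref{lem:change_cat}, the new band contains the target value $ax'$ so that the rest of the $P_A$-path lifts without further failure; monotonicity of $ax^i$ along the remaining subpath, or a convexity argument on $\operatorname{conv}(x^i, x^{i+1})$, is what would prevent oscillation. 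Assembling: at most one ``detour'' of cost $d(\bar A) + d(Q_B) + 1$, plus a lifted $P_A$-walk of cost $\le d(P_A)$, plus the final $d(\bar B)$ leg, plus $O(1)$ adjacency steps, giving the claimed $d(P_A) + 2d(\bar A) + d(\bar B) + 2$ once one accounts for an extra $d(\bar A)$ from a Lemma~\ref{lem:m-path} realignment and absorbs the additive constants — I would reconcile the exact constant against Figure~\ref{fig:in_band} and the detailed step count, since that bookkeeping is where the stated coefficients ($2d(\bar A)$, $+2$) must come from.
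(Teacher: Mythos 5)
Your overall skeleton is right---lift a shortest $P_A$-path from $(x,y)$, finish with a Lemma~\ref{lem:m-path} walk of length at most $d(\bar B)$ to fix the $y$-coordinate, and repair failed lifts a bounded number of times (indeed at most twice, once per side of the band)---but the central mechanism that makes the repair cheap and terminates the failures is missing, and the mechanism you propose in its place does not work. You suggest repairing a failed step by invoking Lemma~\ref{lem:change_cat}, at cost $d(\bar A)+d(Q_B)+1$ per detour. That cannot yield the stated bound: the bound $d(P_A)+2d(\bar A)+d(\bar B)+2$ contains no $d(Q_B)$ term, and Lemma~\ref{lem:change_cat} only guarantees the existence of \emph{some} nearby $\mathbf{x}$-vertex, with no control over where it sits relative to the chosen path, so you cannot resume the lift at the right place. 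The paper's repair is different: when the first failed step overshoots $\alpha^*=\max\band_{\mathbf{x}}(y)$, Lemma~\ref{lem:out_of_band} lands you at $(x^{1,2},y^1)$ with $y^1$ a vertex of the $Q_B$-edge through $y$; one then chooses the \emph{largest} index $i$ with $ax^i>\alpha^*$, notes that $(x^{i,i+1},y^1)$ is also a vertex on the face with $y$-coordinate fixed at $y^1$, and applies Lemma~\ref{lem:m-path} (with the roles of $x$ and $y$ swapped) to jump between these two vertices in at most $d(\bar A)$ steps; one further step lifts the single $Q_B$-edge step $y^1\to y^2$ and puts you back on the path at $x^{i+1}$ (or at $(x^{i,i+1},y^2)$). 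That is where the cost $d(\bar A)+1$ per repair comes from.

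Your termination claim is also not correct as stated: it is not true that after one detour ``the new band contains $ax'$ so that the rest of the $P_A$-path lifts without further failure.'' After the jump past the maximal index $i$ with $ax^i>\alpha^*$, later steps can still fail by \emph{undershooting} $\min\band_{\mathbf{x}}(y)$, and the paper's proof handles exactly this with a second, symmetric jump (choosing $\ell$ maximal with $ax^{\ell}<ax^{j,j+1}$, jumping along the face with $y$-coordinate $y^2$, then lifting the edge step $y^2\to y^1$). What precludes a third failure is not any widening of the band but the maximality of the indices $i$ and $\ell$: beyond $i$ no path vertex exceeds the band maximum, and beyond $\ell$ none falls below the band minimum, so the tail $x^{\ell+1},\ldots,x^k$ lifts entirely and argument ($\star$) finishes with the $d(\bar B)$ term. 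The two jumps plus the two lifted $Q_B$-edge steps account precisely for the $2d(\bar A)+2$, which is the bookkeeping you left open. So the proposal, as written, has a genuine gap at the heart of the lemma.
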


\begin{proof}
Let $(x, y)$, $(x', y')$ be $\mathbf{x}$-vertices. Recall, $x, x' \in P_A$ are vertices. We let 
\[
x= x^0, \ldots, x^k =x'
\]denote a shortest path between $x$ and $x'$ in $P_A$ and note that $k \leq d(P_A)$. We consider several cases.

First, if the path lifts successfully to $(x, y)$, then there exists a vertex $(x^k, y^k)$ for some $y^k$ with $\s(y^k) \subseteq \s(y)$ and $d((x, y), (x^k, y^k)) = k \leq d(P_A)$. By Lemma \ref{lem:m-path}, we have that $d((x^k, y^k), (x^k, y')) \leq d(\bar{B})$. Thus, we have that 
\[
d((x, y), (x', y'))\leq d(P_A) + d(\bar{B}).
\]
In a following case, we use a nearly identical argument to establish a diameter bound; for reference, we label this argument ($\star$).

For the next case, suppose that a step fails to lift, and without loss of generality, we assume that the first step fails to lift. We recall that $y$ lies on an edge of $Q_B$, which we denote by $\operatorname{conv}(y^1, y^2)$.  By Lemma \ref{lem:band_step_criterion}, $ax^2 \notin \band_{\mathbf{x}}(y)$. First, we assume that $ax^2 > \alpha^* := \max \{ \alpha : \alpha \in \band_{\mathbf{x}}(y) \}$. By Lemma \ref{lem:out_of_band}, the step terminates at the point $(x^{1,2}, y^1)$ where $x^{1,2} \in \operatorname{conv}(x^1, x^2)$ and $ax^{1,2} = \alpha^* = c_a + c_b - by^1$ (relabeling $y^2$ as $y^1$ if necessary). Let $i$ be the largest index such that $ax^i > ax^{1,2}$. By Lemma \ref{lem:out_of_band}, there exists a unique point $x^{i,i+1} \in \operatorname{conv}(x^i, x^{i+1})$ such that $ax^{i,i+1} = ax^{1,2}$. By Lemma \ref{lem:m-path}, $d((x^{1,2}, y^1), (x^{i, i+1}, y^1)) \leq d(\bar{A})$, essentially, we ``jump.'' Next, we will lift steps continuing at $x^{i}$.  For reference later in the proof, we label this argument ($\star \star$).

If the step $y^1$ to $y^2$ lifts successfully to $(x^{i, i+1}, y^1)$, then the step terminates at $(x^{i, i+1}, y^2)$. Otherwise, the step terminates at $(x^{i+1}, y^{1,2})$ for some unique $y^{1,2} \in \operatorname{conv}(y^1, y^2)$. First, consider the case where the step terminates at $(x^{i+1}, \hat{y})$. If each step of $x^{i+1}, \ldots, x^k$ lifts successfully, then the walk is completed in an additional $k - (i+1) + d(\bar{B})$ steps according to ($\star$). Therefore, 
\[
d((x, y), (x',y')) \leq d_{P_A}(x, x') + d(\bar{A}) + d(\bar{B}) + 1.
\]

Now, suppose that a step in $x^{i+1}, \ldots, x^k$ fails to lift. Let $x^j$ to $x^{j+1}$ be the first step that fails. The step must terminate at $(x^{j,j+1}, y^2)$ (as opposed to $(x^{j,j+1}, y^1)$), for some unique $x^{j, j+1} \in \operatorname{conv}(x^j, x^{j+1})$, because $i$ was chosen maximally such that $ax^i > \alpha^*$. Therefore, $ax^{j+1} < \min \{ \alpha : \alpha \in \band_{\mathbf{x}}(y) \}.$ This case is identical to the case where the step terminated at $(x^{i, i+1}, y^1).$ We choose $\ell$ maximally such that $ax^{\ell} < ax^{j, j+1}.$ By Lemma \ref{lem:out_of_band}, there exists a unique point $x^{\ell,\ell+1}$ such that $(x^{\ell,\ell+1}, y^2)$ is a vertex. By ($\star \star$) $d((x^{j,j+1}, y^2), (x^{\ell,\ell+1}, y^2)) \leq d(\bar{A})$. From $(x^{\ell,\ell+1}, y^2)$, we lift the step $y^2$ to $y^1$, which terminates at $(x^{\ell+1}, y^{1,2})$ because $ax^{\ell+1} \in \band_{\mathbf{x}}(y)$ by maximality of $j$ and $\ell$. Further, each step of the remaining path $x^{\ell+1}, \ldots, x^k$ lifts successfully by maximality of $i$ and $\ell$. By ($\star$),
$d((x^{\ell+1}, y^{1,2}), (x', y')) \leq k - (\ell+1) + d(\bar{B}).
$

The case where the first step that fails to lift violates the minimum of $\band_{\mathbf{x}}(y)$ can be treated similarly. Thus, in all cases \[
d((x, y), (x', y')) \leq d_{P_A}(x, x') + 2d(\bar{A}) + d(\bar{B}) + 2. 
\]
\qed
\end{proof}

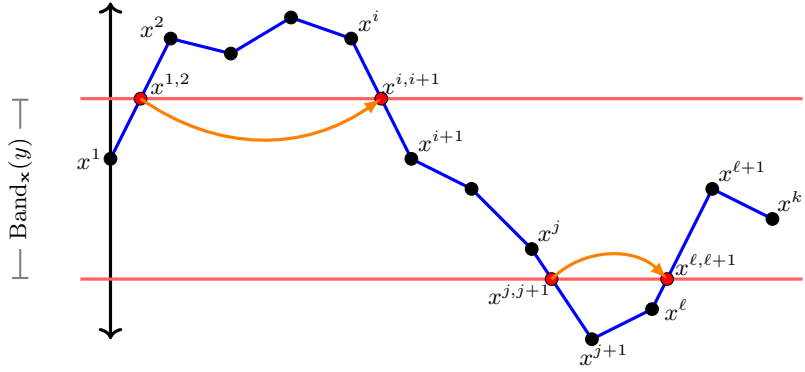
\begin{figure*} \label{band_path_fig}
    \centering
    \begin{tikzpicture}[scale = .8]
        \draw[very thick, <->] (0, -3) -- (0, 2.6);
        \draw[very thick, blue] (0, 0) -- (1, 2) -- (2, 1.75) -- (3, 2.35) -- (4, 2) -- (5, 0) -- (6, -0.5) -- (7, -1.5) -- (8, -3) -- (9, -2.5) -- (10, -0.5) -- (11, -1);
        \draw[very thick, pink!250] (-0.5, 1) -- (11.5, 1);
        \draw[very thick, pink!250] (-0.5, -2) -- (11.5, -2);
        
        \draw[fill = black] (0, 0) circle (3pt);
        \draw[fill = black] (1, 2) circle (3pt);
        \draw[fill = black] (2, 1.75) circle (3pt);
        \draw[fill = black] (3, 2.35) circle (3pt);
        \draw[fill = black] (4, 2) circle (3pt);
        \draw[fill = black] (5, 0) circle (3pt);
        \draw[fill = black] (6, -0.5) circle (3pt);
        \draw[fill = black] (7, -1.5) circle (3pt);
        \draw[fill = black] (8, -3) circle (3pt);
        \draw[fill = black] (9, -2.5) circle (3pt);
        \draw[fill = black] (10, -0.5) circle (3pt);
        \draw[fill = black] (11, -1) circle (3pt);
        \draw[fill = red] (0.5, 1) circle (3pt);
        \draw[fill = red] (4.5, 1) circle (3pt);
        \draw[fill = red] (7.33, -2) circle (3pt);
        \draw[fill = red] (9.25, -2) circle (3pt);

        \draw[very thick, orange, -latex] (0.5, 1) to[bend left = -35] (4.5, 1);
        \draw[very thick, orange, -latex] (7.33, -2) to[bend left = 45] (9.25, -2);

        \node at (-.35, 0) {$x^1$};
        \node at (0.7, 2.2) {$x^2$};
        \node at (4.3, 2.3) {$x^i$};
        \node at (5.5, 0.3) {$x^{i+1}$};
        \node at (7.3, -1.2) {$x^j$};
        \node at (8.2, -3.25) {$x^{j+1}$};
        \node at (9.4, -2.5) {$x^{\ell}$};
        \node at (10.5, -0.2) {$x^{\ell+1}$};
        \node at (11.3, -0.7) {$x^k$};
        \node at (1, 1.25) {$x^{1,2}$};
        \node at (5, 1.25) {$x^{i,i+1}$};
        \node at (6.8, -2.25) {$x^{j,j+1}$};
        \node at (9.9, -1.75) {$x^{\ell, \ell+1}$};

        \node [rotate=90] at (-1.5, -0.5) {$\band_{\mathbf{x}}(y)$};
        \draw[thick, gray, -|] (-1.5, 0.5) -- (-1.5, 1);
        \draw[thick, gray, -|] (-1.5, -1.5) -- (-1.5, -2);
    \end{tikzpicture}
    \caption{Illustration of the path constructed in Lemma \ref{lem:in_band}.}
    \label{fig:in_band}
\end{figure*}

We are ready to prove our bound for $d(P \oplus_2 Q)$.

\begin{thm} \label{thm:2sum_diam_bound}
The diameter of $P \oplus_2 Q$ is at most 
\[
d(A)(d(\bar{A}) + 1) + d(B)(d(\bar{B}) + 1) 
+ \min\{ d(\bar{A}) + d(B) + 1, d(A) + d(\bar{B}) + 1 \}.
\]
\end{thm}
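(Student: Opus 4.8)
The plan is to combine the three preceding lemmas with a counting argument over the two categories of vertices. Let $(u,v)$ and $(u',v')$ be arbitrary vertices of $P\oplus_2 Q$. By Lemma \ref{lem:change_cat} (and its analogue for $\mathbf{y}$-vertices), at a cost of at most $\min\{d(\bar A)+d(Q_B)+1,\ d(\bar B)+d(P_A)+1\}$ we may reduce to the case where both endpoints are $\mathbf{x}$-vertices (or, symmetrically, both are $\mathbf{y}$-vertices); this is where the final $\min\{\cdot,\cdot\}$ term in the statement comes from. So the core of the argument is: bound the distance between two $\mathbf{x}$-vertices $(x,y)$ and $(x',y')$.

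The key idea is that Lemma \ref{lem:in_band} already handles the ``in-band'' case, $ax'\in\band_{\mathbf x}(y)$, in $d(P_A)+2d(\bar A)+d(\bar B)+2$ steps. For the general case I would walk along a shortest path $x=x^0,\dots,x^k=x'$ in $P_A$ and lift it step by step using Lemmas \ref{lem:band_step_criterion} and \ref{lem:out_of_band}. Each time a step fails to lift, the lift terminates at a $\mathbf{y}$-vertex; from there I spend at most $d(\bar B)$ steps among $\mathbf{y}$-vertices with the same $y$-coordinate (the $\mathbf{y}$-analogue of Lemma \ref{lem:m-path}) to realign, then resume lifting. The crucial bookkeeping point is that each ``realignment'' corresponds to moving the $y$-coordinate to a genuinely new vertex of $Q_B$, and there are at most $d(B)$ such changes before we can finish in-band — so the total is roughly $d(A)$ steps in $P_A$ plus $d(B)$ blocks, each block costing one failed-lift transition ($+1$) plus a re-alignment ($d(\bar A)$ or $d(\bar B)$). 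Summing the contributions from both the $P_A$-side and the $Q_B$-side walks, and absorbing Lemma \ref{lem:in_band} for the terminal in-band segment, yields the two products $d(A)(d(\bar A)+1)$ and $d(B)(d(\bar B)+1)$.

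Concretely, I would structure the proof as: (i) reduce to same-category endpoints via Lemma \ref{lem:change_cat}, incurring the $\min$ term; (ii) for two $\mathbf{x}$-vertices, set up the lifted-walk process, defining a potential that counts how many times the $y$-coordinate has changed vertex in $Q_B$ (bounded by $d(B)$) and symmetrically how many times the $x$-coordinate has changed in $P_A$ (bounded by $d(A)$); (iii) charge each step of the constructed walk either to a step along one of the two shortest paths, to a failed-lift transition ($+1$, at most $d(A)+d(B)$ of them), or to a Lemma \ref{lem:m-path}-type realignment ($d(\bar A)$ or $d(\bar B)$, at most $d(A)+d(B)$ of them); (iv) conclude once the process reaches an in-band configuration and invoke Lemma \ref{lem:in_band}. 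A clean way to package (ii)–(iii) is induction on $d(A)+d(B)$: peel off the first failed-lift block, observe the residual problem has a strictly smaller such sum (because a vertex of $P_A$ or $Q_B$ has been permanently ``used up''), and apply the inductive hypothesis.

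The main obstacle I anticipate is step (iii): making the charging argument honest, i.e., verifying that the alternation between ``$P_A$-blocks'' and ``$Q_B$-blocks'' really does terminate and that no block is counted twice. In the proof of Lemma \ref{lem:in_band} the authors only had to deal with a single pair $y^1,y^2$ and a fixed band; here the band $\band_{\mathbf x}(y)$ changes every time $y$ moves, so I need to argue that the sequence of bands is monotone in a suitable sense — each failed lift pushes $ax$ strictly outside the current band in a consistent direction, forcing a new vertex of $Q_B$ — which is exactly what prevents cycling. Getting the constants to match the stated bound (the $+1$'s attached to $d(\bar A)$ and $d(\bar B)$, and the precise role of $d(\bar A)$ versus $d(A)$) will require care, but the structure above should produce exactly $d(A)(d(\bar A)+1)+d(B)(d(\bar B)+1)+\min\{d(\bar A)+d(B)+1,\ d(A)+d(\bar B)+1\}$.
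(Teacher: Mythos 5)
Your high-level strategy is the same as the paper's: use Lemma \ref{lem:change_cat} to reduce to two same-category vertices (that is indeed where the $\min$ term comes from), then lift a shortest path in $P_A$, and whenever a lift fails, switch to lifting steps in $Q_B$ and alternate until an in-band configuration lets Lemma \ref{lem:in_band} finish. The genuine gap is precisely the point you flag as your main obstacle, and neither of the mechanisms you propose closes it. First, the claim that each failed lift pushes $ax$ out of the current band ``in a consistent direction'' is not something you can rely on: already inside the proof of Lemma \ref{lem:in_band} the lifted walk can exit the band above its maximum at one step and below its minimum at a later step, and that proof has to handle both ends (via maximality of the indices $i$, $j$, $\ell$), so monotonicity of the exit direction is not available. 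Second, ``each realignment forces a genuinely new vertex of $Q_B$'' only bounds the number of blocks by the number of vertices of $Q_B$, which can be arbitrarily larger than $d(B)$; similarly, an induction on $d(A)+d(B)$ does not make sense, since these are diameters of fixed matrix classes and do not decrease when a vertex is ``used up.'' So as written, your charging argument has no a priori bound of $d(A)$ and $d(B)$ on the numbers of blocks, which is exactly what produces the two products in the statement.

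The device the paper uses, and which is missing from your sketch, is to pin everything to two \emph{fixed} shortest paths. Applying Lemma \ref{lem:in_band} and Lemma \ref{lem:out_of_band} from \emph{both} endpoints produces two specific vertices $y^1$ and $(y')^1$ of $Q_B$ (the band-boundary vertices hit by the first failed step from each side, with $ax^{i,i+1}=\max\band_{\mathbf{x}}(y)$ and $ax^{j-1,j}=\min\band_{\mathbf{x}}(y')$ after a normalization). One then fixes a shortest path $y^1,\dots,y^r=(y')^1$ in $Q_B$ with $r\le d(B)$, alongside the fixed shortest path $x^0,\dots,x^k$ in $P_A$ with $k\le d(A)$, and lifts only steps of these two paths. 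Maximality choices of indices (e.g., the largest $s$ with $by^s>by^{\ell,\ell+1}$) guarantee that every block of at most $1+d(\bar A)$ resp. $1+d(\bar B)$ edge steps strictly advances the current position along one of the two fixed paths, until a vertex of the form $(x^{j+1},\hat y)$ or $(\hat x, y^r)$ is reached and the walk is completed with the bound already obtained from the $(x',y')$ side. This is what yields $d(A)(d(\bar A)+1)+d(B)(d(\bar B)+1)$ before adding the $\min$ term; without fixing the $Q_B$-path between these two particular vertices and proving monotone progress along the indices, the alternation is not guaranteed to terminate within the claimed budget. (A small slip as well: two $\mathbf{y}$-vertices sharing the same $y$-coordinate are within $d(\bar A)$ of each other, not $d(\bar B)$; it is the $x$-vertices sharing an $x$-coordinate that are within $d(\bar B)$, as in Lemma \ref{lem:m-path}.)
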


\begin{proof}
Let $(x, y), (x', y') \in P \oplus_2 Q$ be $\mathbf{x}$-vertices. The case with two $\mathbf{y}$-vertices follows from interchanging the roles of $x$ and $y$. The case with one $\mathbf{x}$-vertex and one $\mathbf{y}$-vertex follows from Lemma \ref{lem:change_cat} by noting that the $\mathbf{y}$-vertex is at most $d(\bar{A}) + d(B) + 1$ steps from an $\mathbf{x}$-vertex.

We assume $ax' \notin \band_{\mathbf{x}}(y)$ and $ax \notin \band_{\mathbf{x}}(y')$; otherwise, we are done by Lemma \ref{lem:in_band}. Further, we assume that $ax' > \max \{ \alpha : \alpha \in \band_{\mathbf{x}}(y) \}$ and $ax < \min \{ \alpha : \alpha \in \band_{\mathbf{x}}(y') \}$. Given a vertex $(x, y)$ and some other $x'$, we define $y_*$ to be the unique point satisfying $by_* = c_a + c_b - ax'$ and $\s(y_*) \subseteq \s(y)$. We define $x_*$ analogously.

Let $x = x^0, x^1, \ldots, x^k = x'$ denote a shortest path between $x$ and $x'$ in $P_A$. Let $i$ denote the largest index such that $ax^{i} \in \band_{\mathbf{x}}(y)$. By Lemma \ref{lem:in_band}, $(x^{i}, y_*) \in P \oplus_2 Q$ is a vertex and 
\[
d((x^0, y), (x^{i}, y_*)) \leq i + 2d(\bar{A}) + 1.
\]
By maximality of $i$, the step $x^{i}$ to $x^{i+1}$ fails to lift and terminates at $(x^{i, i+1}, y^1)$ where $y^1$ is a vertex on the edge of $Q_B$ containing $y$ and $x^{i, i+1} \in \operatorname{conv}(x^{i}, x^{i+1})$ such that $ax^{i, i+1} = c_a+c_b-by^1$. 

Without loss of generality, we assume that $ax^{i, i+1} = \max \{ \alpha : \alpha \in \band_{\mathbf{x}}(y) \}$. Consider the vertex $(x^k, y')$. If $ax^{i} \in \band_{\mathbf{x}}(y')$, then $d((x^k, y'), (x^i, y_*')) \leq k-i + 2d(\bar{A}) + 1$. Further, by Lemma \ref{lem:m-path}, we have $d((x^i, y_*), (x^i, y_*')) \leq d(\bar{B})$. By combining these bounds, we are done. 

Thus, we now assume $ax^{i} \notin \band_{\mathbf{x}}(y')$. Let $j \in \{ i+1, \ldots, k \}$ be the smallest index such that $ax^{j} \in \band_{\mathbf{x}}(y')$, which exists because $ax^k \in \band_{\mathbf{x}}(y')$. By Lemma \ref{lem:in_band}, there exists $y'_*$ with $\s(y'_*) = \s(y')$ and 
\begin{equation}\label{eqn:x^ky'}
d((x^k, y'), (x^{j}, y'_*)) \leq k-j + 2d(\bar{A}) + 1.
\end{equation}

The step $x^{j}$ to $x^{j-1}$ fails to lift, and the resulting vertex is $(x^{j-1, j}, (y')^1)$, where $(y')^1$ is the vertex on the edge of $Q$ containing $y'$ and $x^{j-1, j} \in \operatorname{conv}(x^{j-1}, x^{j})$ such that $ax^{j-1, j} = c_a+c_b-b(y')^1$. Without loss of generality, we assume that $ax^{j-1, j} = \min \{ \alpha : \alpha \in \band_{\mathbf{x}}(y') \}$.

Recall that $y^1, (y')^1 \in Q_B$ are vertices. Consider a shortest path $y^1$, $y^2$, $\ldots$, $y^r = (y')^1$. We lift steps from the path to $(x^{i, i+1}, y^1)$ until a step fails to lift. There are three possible outcomes:\\ 

\begin{enumerate}
    \item Each step lifts successfully; the walk terminates at $(x^{i, i+1}_*, y^r)$.
    \item Each step up to some $\ell$ lifts successfully; the walk terminates at $(x^{i}, y^{\ell, \ell +1})$.
    \item Each step up to some $\ell$ lifts successfully; the walk terminates at $(x^{i+1}, y^{\ell, \ell +1})$.
\end{enumerate}
In the first case, note that by Lemma \ref{lem:m-path} we have
\[
d((x^{i, i+1}_*, y^r), (x^{j-1,j}, y^r)) \leq d(\bar{A}).
\]
Further, recall $(x^{j-1,j}, y^r)$ is adjacent to $(x^j, y_*')$. By Equation \ref{eqn:x^ky'}, we have a bound on the number of steps in the remainder of the walk. Therefore, we are done.

It remains to resolve the second and third case, i.e., the cases in which a step fails to lift. Note that when the walk reaches a vertex $(x^{j+1}, \hat{y})$ or $(\hat{x}, y^r)$ for some $\hat{x}$ or $\hat{y}$, we can complete it as explained above. We will show that, before we reach such a vertex, we can iteratively move ``closer'': in a certain number of steps from a vertex of form $(x^t, y^{u,u+1})$ for $i \leq t < j+1$ and $\ell \leq u < r$, we can either reach the vertex $(x^{t+1}, y^{u',u'+1})$ for $u \leq u'$ or reach the vertex $(x^{t,t+1}, y^r)$. We first only prove reachability; we later conclude the argument with a bound on the number of steps.

Consider the second case: the step $y^{\ell}$ to $y^{\ell+1}$ terminates at $(x^{i}, y^{\ell, \ell +1})$. If $ax^j \in \band_{\mathbf{x}}(y^{\ell, \ell+1})$, then by Lemma \ref{lem:in_band} we have 
\[
    d((x^{i}, y^{\ell, \ell +1}), (x^{j}, y'_*)) \leq 
    d(P_A) + 2d(\bar{A}) + d(\bar{B}) + 2,
\]
and, in combination with Equation \ref{eqn:x^ky'}, we are done.
Now suppose that $ax^j \notin \band_{\mathbf{x}}(y^{\ell, \ell+1})$. 
There exists a maximal $s \in \{\ell+1, \ldots, r \}$ such that $by^s > by^{\ell, \ell +1}$, because $by^{\ell +1} > by^{\ell, \ell +1} > by^r$. Further, there exists a unique $y^{s, s+1} \in \operatorname{conv}(y^s, y^{s+1})$ such that $by^{s, s+1} = by^{\ell, \ell +1}$ and $(x^i, y^{s, s+1})$ is a vertex. By Lemma \ref{lem:m-path}, we have
$
d((x^{i}, y^{\ell, \ell +1}), (x^{i}, y^{s, s+1})) \leq d(\bar{B})$.

Next, consider the possible lift of a step from $x^i$ to $x^{i+1}$ at $(x^{i}, y^{s, s+1})$. If $ax^{i+1} \in \band_{\mathbf{x}}(y^{s, s+1})$, then the vertices $(x^{i}, y^{s, s+1})$ and $(x^{i+1}, y^{s, s+1})$ are adjacent. Thus, 
\[
d((x^{i}, y^{\ell, \ell +1}), (x^{i+1}, y^{s, s+1})) \leq 1+d(\bar{B}).
\]
If $ax^{i+1} \notin \band_{\mathbf{x}}(y^{s, s+1})$, then the step from $x^i$ to $x^{i+1}$ fails to lift and terminates at some $(x^{i, i+1}_*, y^{s+1})$. We proceed by lifting steps of the path $y^{s+1}, \ldots, y^r$ starting from $(x^{i,i+1}_*, y^{s+1})$. By maximality of $s$, the path lifts successfully and terminates at $(x^{i,i+1}_*, y^r)$, or some step fails to lift and terminates at $(x^{i+1}, y^{t, t+1})$ for some $t \geq s+1$. Thus, we have reached a vertex with component $x^{i+1}$ or $y^r$.

The third case works analogously. For either case, we can iterate the process described above to reach a vertex of form $(x^{j+1}, \hat{y})$ or $(\hat{x}, y^r)$.
It remains to devise a bound on the number of steps. Starting from vertex $(x^i, y^1)$, there exists a $\hat{y}$ such that $(x^{i+1}, \hat{y})$ is a vertex and $d((x^i, y), (x^{i+1}, \hat{y}))$ is at most $1 + d(\bar{A})$ plus at most an additional $1+d(\bar{B})$ steps for each lifted step in the walk $y^1, \ldots, y^r$: 
\[
d((x,y), (x',y')) \leq d(A)(1 + d(\bar{A})) + d(B)(1 + d(\bar{B})).
\]
Finally, note that an additional $d(\bar{A}) + d(B) + 1$ or $d(A) + d(\bar{B}) + 1$ steps may be required to walk to an $\mathbf{x}$-vertex or $\mathbf{y}$-vertex, respectively.
\qed
\end{proof}

We have shown that the diameter of $P \oplus_2 Q$ is at most quadratic in $d(\bar{A})$, $d(A)$, $d(\bar{B})$, $d(B)$. With some additional arguments, our work may lead to a diameter bound that is linear in $d(\bar{A})$ and $d(\bar{B})$. 

\begin{rmk}
Our diameter bound for the $2$-sum is linear if the following is true. Let $P \subseteq \R^{n}$ be a simple polyhedron with vertices $v$, $w$. Let $v = v^0, v^1, \ldots, v^k = w$ denote a shortest path from $v$ to $w$. Suppose that there exists $(a, \gamma) \in \R^{n+1}$ such that $av^0 < \gamma < av^1$ and  $av^{k-1} > \gamma > av^k$, then there exists $v^{0,1} \in \operatorname{conv}(v^0, v^1)$ and $v^{k-1,k} \in \operatorname{conv}(v^{k-1}, v^k)$ such that $av^{0,1} = av^{k-1, k} = \gamma$. Let $P' = P \cap \{ x : ax = \gamma \}$. It follows that $av^{0,1}, av^{k-1, k} \in P'$ are vertices. Is $d(av^{0,1}, av^{k-1, k}) \in O(k)$?
\end{rmk}

In our work, the best bound we have is $d(\bar{A})$, which leads to a quadratic bound. We note that the simplicity of $P$ is necessary; when the polyhedron is degenerate, even in dimension $3$, $d(\hat{v}^1, \hat{v}^{k-1})$ can be arbitrarily larger than $k$. For example, consider a 3-dimensional pyramid with an $n$-gon base. This polyhedron has diameter 2, however, a slice of the polyhedron could have diameter $n/2$.

\subsection{Application to Appending a Unit Column}

The addition of a unit column to a matrix is another operation relevant to TU polyhedra; it, too, appears in Seymour's decomposition \cite{s-98}. Further, appending a unit column to the equality constraint matrix of a polyhedron is equivalent to relaxing an equality constraint to inequality. We can view this operation as a special case of the $2$-sum. Observe that, for a matrix $M$, $\begin{bmatrix} 1 & 1 \end{bmatrix} \oplus_2 M$ prepends a unit column to the matrix. While Theorem \ref{thm:2sum_diam_bound} gives a quadratic bound in general, we adjust our methods to yield a linear bound for this case. Consider the following two polyhedra:
\[
P := \left\{ x : \begin{bmatrix}   
    A \\
    a
\end{bmatrix}
x
= 
\begin{bmatrix}   
    c_A \\
    c_a
\end{bmatrix},
x \geq 0
\right\}, \quad
P_{\bar{A}} := \left\{ \begin{bmatrix}   
    x\\
    s
\end{bmatrix} : \begin{bmatrix}   
    A & 0 \\
    a & 1
\end{bmatrix}
\begin{bmatrix}   
    x\\
    s
\end{bmatrix}
= 
\begin{bmatrix}   
    c_A \\
    c_a
\end{bmatrix},
x, s \geq 0
\right\}.
\]
We will prove that the diameter of $P_{\bar{A]}}$ is bounded above by a linear function in terms of $d(P)$ and $d(A)$. Let $P_A$ be the same as before. A bound for $d(P_{\bar{A}})$ also yields a diameter bound for the polyhedron arising from $P_A$ by relaxing $ax = c_a$ to $ax \leq c_a$.

\begin{thm}
The diameter of $P_{\bar{A}}$ is at most $
d(A) + d(P) + 2$.
\end{thm}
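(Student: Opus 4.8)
The polyhedron $P_{\bar A}$ is the $2$-sum $\begin{bmatrix}1 & 1\end{bmatrix}\oplus_2 \bar A$ (up to a harmless relabeling of which block is ``$P$'' and which is ``$Q$''), so in principle one could invoke Theorem~\ref{thm:2sum_diam_bound}; but that only gives a quadratic bound, and the point here is that the slack-variable summand is one-dimensional, so the ``band'' structure degenerates and we can do much better. I would set up the correspondence directly: a vertex of $P_{\bar A}$ is a pair $(x,s)$ with $Ax=c_A$, $ax+s=c_a$, $x,s\ge 0$. When $s>0$ the point $x$ is a vertex of $P_A$ and $(x,s)$ is (in the terminology of the excerpt) an $\mathbf{x}$-vertex; when $s=0$ the point $x$ is a vertex of $P$ and $(x,0)$ is a $\mathbf{y}$-vertex. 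The key simplification is that the summand $Q$ here has matrix $\begin{bmatrix}1\\1\end{bmatrix}$ with $Q_B$ the single point $\{c_s\}$ — actually $Q_B$ collapses so that $d(Q_B)=0$ and the ``edge of $Q_B$'' on which an $\mathbf{x}$-vertex sits is the whole ray $s\ge 0$; consequently the band $\band_{\mathbf x}(\,\cdot\,)$ is the interval $(-\infty, c_a]$, i.e.\ it has a maximum $c_a$ but \emph{no} lower endpoint. This is what kills the quadratic term: a step in $P_A$ can only ever fail to lift ``from above,'' never ``from below.''

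I would then argue as follows. \textbf{Step 1 (reduce to the $s=0$ face).} Show every vertex of $P_{\bar A}$ is within $d(A)+1$ steps of a $\mathbf{y}$-vertex, i.e.\ of the face $\{s=0\}\cong P$. Given an $\mathbf{x}$-vertex $(x,s)$ with $s>0$, pick any vertex $v$ of $P$; by Lemma~\ref{lem:m-path} (with the roles of $\bar A,\bar B$ as appropriate) or more simply by lifting a shortest $P_A$-path from $x$ toward $v$ and using that failures only occur from above, the lift reaches a point with $s=0$ in at most $d(A)$ steps, and that terminal vertex is a $\mathbf{y}$-vertex; one final step lands on an actual vertex of the $\{s=0\}$ face if needed. (I expect this bookkeeping — getting exactly $d(A)+1$ and not $d(A)+2$ — to need a little care, using that when a lifted step fails it terminates exactly on $\{s=0\}$.) \textbf{Step 2 (travel within the face).} The face $\{s=0\}$ of $P_{\bar A}$ is affinely isomorphic to $P$, so any two $\mathbf{y}$-vertices are joined by a path of length at most $d(P)$ inside $P_{\bar A}$. \textbf{Step 3 (combine).} For arbitrary vertices $u,w$ of $P_{\bar A}$: route $u$ to a $\mathbf{y}$-vertex ($\le d(A)+1$), across $\{s=0\}$ ($\le d(P)$), then... wait — $w$ also needs $\le d(A)+1$ to reach $\{s=0\}$, which would give $d(A)+d(P)+\dots$ too large. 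So instead I would route only $u$ down to the face (cost $\le d(A)+1$), land at a vertex $v_u$ of $P$, and then lift a shortest $P$-path — no, better: observe $P_{\bar A}$ retracts onto $\{s=0\}$ cheaply from one side, and handle $w$ by a single application of the ``in-band'' type argument.

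The cleanest combination giving the stated $d(A)+d(P)+2$: take a shortest path in $P$ from $v_u$ toward (the $P$-vertex associated with) $w$; lift it into $P_{\bar A}$ starting from $u$ — this is legitimate because $ax$ along such a path may exceed $c_a$ only transiently, and when it does the lifted walk runs along $\{s=0\}$, when it doesn't $s>0$; the band having no lower bound means we never ``jump,'' so the lifted walk has the same length, at most $d(P)$, and ends at a vertex sharing $w$'s $x$-coordinate. From there Lemma~\ref{lem:m-path} (in the degenerate form: two $\mathbf{x}$-vertices with the same $x$ differ by at most $d(\bar B)=1$ step, here literally $s$ versus $s=0$, cost $\le 1$) finishes, and one more step accounts for reaching $w$ itself. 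Tallying $1$ (initial, to get $u$'s $x$ onto a $P$-vertex / handle degeneracy) $+\,d(A)$ is not quite how it falls out; rather the split is $d(P)$ for the lifted path, $d(A)$ hidden in one ``straightening'' step bounded via $d(A)$, and $+2$ for the two adjacency corrections. \textbf{Main obstacle.} The delicate point is justifying that the lift of a shortest $P$-path incurs \emph{no} length overhead — equivalently, that the one-sidedness of the band ($\band_{\mathbf x}=(-\infty,c_a]$) really does eliminate the ``jump'' steps that produce the quadratic blow-up in Theorem~\ref{thm:2sum_diam_bound} — and in pinning down exactly where the single $d(A)$ term and the two $+1$'s enter, so that the constants add to precisely $d(A)+d(P)+2$ rather than something larger. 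I would expect the proof to mirror the structure of Lemma~\ref{lem:in_band}'s proof but with the $2d(\bar A)$ and $d(\bar B)$ contributions each shrinking to a single unit step because the ``other side'' of the band is absent and $\bar B$ has only one row.
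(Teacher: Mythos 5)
Your setup is on the right track (the one-sidedness of the band, $\band_{\mathbf{x}} = (-\infty, c_a]$ up to feasibility, is indeed what kills the quadratic term, and the face $\{s=0\}\cong P$ is where the $d(P)$ term comes from), but the proposal never actually produces a walk whose length adds up to $d(A)+d(P)+2$, and the combination step you settle on does not work as stated. Concretely: if $w=(x',s')$ has $s'$ basic, then $ax'=c_a-s'<c_a$, so $x'$ is a vertex of $P_A$ but does \emph{not} lie in $P$ at all; there is no ``$P$-vertex associated with $w$,'' so ``take a shortest path in $P$ from $v_u$ toward the $P$-vertex associated with $w$'' has no meaning in the main case. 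Moreover, a path in $P$ lies entirely on the hyperplane $ax=c_a$, i.e.\ on the face $\{s=0\}$, so there is nothing to lift; and your claim that when $ax$ exceeds $c_a$ ``the lifted walk runs along $\{s=0\}$'' with no length overhead is false: the $P_A$-vertices $x^\ell$ with $ax^\ell>c_a$ admit no feasible companion $s\geq 0$, so that stretch of the $P_A$-path cannot be followed at all (not even on $\{s=0\}$, since $(x^\ell,0)$ violates $ax+s=c_a$). You acknowledge this yourself when you write that the tally ``is not quite how it falls out''; as written there is no accounting that yields the stated constant.

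The missing idea is to work with a shortest path in $P_A$ (not $P$) between the $x$-coordinates of two vertices with $s$ basic, and to handle the out-of-band excursion by a detour \emph{across the face} $\{s=0\}$: let $i$ be minimal and $j$ maximal with $ax^{i+1}>c_a$ and $ax^{j}>c_a$. The lift proceeds for the first $i$ steps; the failed step $x^i\to x^{i+1}$ terminates at the vertex $(x^{i,i+1},0)$ with $ax^{i,i+1}=c_a$, and symmetrically $(x^{j,j+1},0)$ is a vertex on the same face. These two are joined inside $\{s=0\}\cong P$ by at most $d(P)$ edges, after which the tail $x^{j+1},\dots,x^k$ lifts successfully because it stays in band. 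This gives $d(A)+d(P)+1$ for two vertices with $s$ basic, $d(P)$ when both have $s$ nonbasic (both lie on the face $P$), and one extra edge to move off the face in the mixed case, for $d(A)+d(P)+2$. So the $d(P)$ term is spent on the detour between the two crossing points, and the $d(A)$ term on the lifted $P_A$-path --- not, as in your sketch, $d(P)$ on a lifted path and $d(A)$ on a vaguely specified ``straightening'' step; without this single-excursion detour argument your Steps 1--3 only give a bound of the form $2(d(A)+1)+d(P)$, which you correctly noticed is too large.
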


\begin{proof}
Suppose that $(x, 0), (x', 0)$ are vertices for which $s$ is nonbasic. Note that $x, x'$ are vertices of $P$, and $P$ is a face of $P_{\bar{A}}$. Thus, 
\[
d_{P_{\bar{A}}}((x, 0), (x', 0)) \leq d(P).
\]

Now, suppose that $(x, s), (x', s')$ are vertices where $s$ and $s'$ are basic variables. Note that $x, x' \in P_A$ are vertices. We consider a shortest path $x = x^1, x^2, \ldots, x^k = x'$, and note that $k \leq d(P_A)$. If for each $x^i$, $ax^i \leq c_a$, then the walk lifts successfully. Otherwise, there exists a minimal $i$ and maximal $j$ such that $ax^{i+1} > c_a$, $ax^j > c_a$. The step $x^{i}$ to $x^{i+1}$ fails to lift to $P_{\bar{A}}$ and results in the vertex $(x^{i, i+1}, 0)$ for some unique $x^{i, i+1} \in \operatorname{conv}(x^i, x^{i+1})$ with $ax^{i, i+1} = c_a$. Likewise, there exists a unique $x^{j, j+1} \in \operatorname{conv}(x^j, x^{j+1})$ with $ ax^{j, j+1} = c_a$. It follows that $(x^{i, i+1}, 0)$ and $(x^{j, j+1}, 0)$ are vertices of $P_{\bar{A}}$. Further, $d((x^{i, i+1}, 0),(x^{j, j+1}, 0)) \leq d(P)$. Thus, 
\[
d_{P_{\bar{A}}}((x^1, s^1), (x^2, s^2)) \leq d(P) + d(P_{\bar{A}}) + 1.
\]

Finally, suppose that $(x, s), (x', s')$ are a pair of vertices where $s$ is nonbasic and $s'$ is basic. The vertex $(x, s)$ is adjacent to a vertex for which $s$ is basic, and thus 
\[
d_{P_{\bar{A}}}((x, s), (x', s')) \leq d(P) + d(P_{\bar{A}}) + 2.
\]
\qed
\end{proof}

\section{Applications to the $3$-Sum Polyhedron}\label{sec:3sum}

In addition to the $2$-sum, Seymour's decomposition of TU matrices makes use of the $3$-sum, defined as follows 
\[
    \begin{bmatrix}
        A & a & a \\
        c & 0 & 1
    \end{bmatrix}
    \oplus_3
    \begin{bmatrix}
        1 & 0 & b \\
        d & d & B
    \end{bmatrix}
    =
    \begin{bmatrix}
        A & ab \\
        dc & B \\
    \end{bmatrix}
\]
where $a$ and $d$ are column vectors of appropriate size, $b$ and $c$ are row vectors of appropriate size.
Like the $2$-sum, the $3$-sum arose as a matroid operation and also is defined for graphs and matrices \cite{s-98}. For graphs, the $3$-sum corresponds to the $3$-clique sum. We define the $3$-sum for standard-form polyhedra by applying the $3$-sum operation to the constraint matrices and combining the right-hand sides in a particular way.

\begin{defi}\label{def:3sum}
Given two polyhedra $P$ and $Q$ in the following forms
\begin{align*}
    P &:= \left\{ x : \begin{bmatrix}
        A & a & a \\
        c & 0 & 1
    \end{bmatrix}
    \begin{bmatrix}
        x \\
        s_1 \\
        s_2
    \end{bmatrix}
    =
    \begin{bmatrix}
        c_A \\
        c_a
    \end{bmatrix}, \, 
    x, s_1, s_2 \geq 0
    \right\}, \\
    Q &:= \left\{ y : \begin{bmatrix}
        1 & 0 & b \\
        d & d & B
    \end{bmatrix}
    \begin{bmatrix}
        s_1 \\
        s_2 \\
        y
    \end{bmatrix}
    =
    \begin{bmatrix}
        c_b \\
        c_B
    \end{bmatrix}, \, 
    y, s_1, s_2 \geq 0
    \right\},
\end{align*}
the $3$-sum of $P$ and $Q$ is 
\[
    P \oplus_3 Q:= \left\{ \begin{bmatrix}
        x \\
        y
    \end{bmatrix} : \begin{bmatrix}
        A & ab \\
        dc & B \\
    \end{bmatrix}
    \begin{bmatrix}
        x \\
        y
    \end{bmatrix}
    =
    \begin{bmatrix}
        c_A + ac_b \\
        c_B + dc_a
    \end{bmatrix}, \, 
    x, y \geq 0
    \right\}.
\]
\end{defi}

The right-hand sides are chosen carefully to ensure that vertices $(x,y)$ of $P \oplus_3 Q$ project to vertices of the input.
Mirroring the $2$-sum, when $ab \neq 0$, $dc \neq 0$, we can row-reduce $P \oplus_3 Q$ to the following form; for brevity, we omit the details

\[
    \left\{ \begin{bmatrix}
        x \\
        y
    \end{bmatrix} : \begin{bmatrix}
        A & 0 \\
        a_1 & b_1 \\
        a_2 & b_2 \\
        0 & B \\
    \end{bmatrix}
    \begin{bmatrix}
        x \\
        y
    \end{bmatrix}
    =
    \begin{bmatrix}
        c_A \\
        c_{a_1} + c_{b_1} \\
        c_{a_2} + c_{b_2} \\
        c_B 
    \end{bmatrix}, \\
    x, y \geq 0
    \right\}.
\]

Next, we also extend the definition of the $\mathbf{x}$-band to the $3$-sum. For the $3$-sum, the $\mathbf{x}$-band is a 2-dimensional polytope because of the two shared constraints, and thus bounds for the $2$-sum do not immediately transfer. However, we can prove distance bounds for \emph{some} pairs of vertices of the $3$-sum.

Like the $2$-sum, we can categorize the vertices of $P \oplus_3 Q$. We say that a vertex $(x, y) \in P \oplus_3 Q$ is an \emph{$\mathbf{x}$-vertex} if $x \in P_A$ is a vertex; likewise, we say $(x, y)$ is a \emph{$\mathbf{y}$-vertex} if $y \in Q_B$ is a vertex. For the $3$-sum, there is an additional category: we say that $(x, y)$ is a \textit{mixed-vertex} if $x$ lies on an edge of $P_A$ and $y$ lies on an edge of $Q_B$. 

\begin{defi}\label{def:band_3sum}
Let $(x, y) \in P \oplus_3 Q$ be a vertex. The $\mathbf{x}$-band of $(x, y)$ is the following subset of $\R^2$:
\begin{align*}
    \band_{\mathbf{x}}(y) := \{ (c_a^1 + c_b^1 - b_1z, \, c_a^2 + c_b^2 - b_2z): 
    Bz = c_B, \, \s(z) \subseteq \s(y), \, z \geq 0 \}.
\end{align*}
\end{defi}

The $\mathbf{x}$-band for the $3$-sum still yields a criterion to determine when two adjacent vertices in $P_A$ correspond to adjacent vertices in $P \oplus_3 Q.$ We explain this criterion in Lemma \ref{lem:3sum_band_lem} which can be proved by modifying the proof of Lemma \ref{lem:band_step_criterion}.

\begin{lem}\label{lem:3sum_band_lem}
    Let $(x, y) \in P \oplus_3 Q$ be an $\mathbf{x}$-vertex, and let $x' \in P_A$ be an adjacent vertex to $x$. The point $(x', y^*)$ is an adjacent vertex of $(x, y)$ in $P \oplus_3 Q$ for some $y^*$ with $\s(y^*) \subseteq \s(y)$ if and only if $(a_1x', a_2x') \in \band_{\mathbf{x}}(y)$.
\end{lem}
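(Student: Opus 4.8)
The plan is to mimic the proof of Lemma \ref{lem:band_step_criterion} almost verbatim, with the one-dimensional band replaced by the two-dimensional band of Definition \ref{def:band_3sum}. First I would set up the row-reduced form of $P \oplus_3 Q$ displayed just before Definition \ref{def:band_3sum}, so that the two ``shared'' rows read $a_1 x + b_1 y = c_a^1 + c_b^1$ and $a_2 x + b_2 y = c_a^2 + c_b^2$, while the remaining rows are exactly $Ax = c_A$ (the defining system of $P_A$) and $By = c_B$ (the defining system of $Q_B$). As in the $2$-sum setting I would invoke the standing assumption that $P \oplus_3 Q$ is simple, so that all of its faces — in particular the face on which $P_A$ sits — are simple as well.

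Next I would prove the two directions. For the forward direction, assume the step $x$ to $x'$ in $P_A$ corresponds to an adjacent vertex $(x', y^*)$ of $(x,y)$ in $P \oplus_3 Q$ with $\s(y^*) \subseteq \s(y)$. Then $(x', y^*) \in P \oplus_3 Q$ forces $By^* = c_B$, $y^* \ge 0$, and $\s(y^*) \subseteq \s(y)$, and the two shared rows give $a_1 x' = c_a^1 + c_b^1 - b_1 y^*$ and $a_2 x' = c_a^2 + c_b^2 - b_2 y^*$; so $(a_1 x', a_2 x')$ is the image of $z = y^*$ in $\band_{\mathbf{x}}(y)$, i.e.\ $(a_1 x', a_2 x') \in \band_{\mathbf{x}}(y)$. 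For the converse, suppose $(a_1 x', a_2 x') \in \band_{\mathbf{x}}(y)$; by Definition \ref{def:band_3sum} there is some $z =: y^*$ with $By^* = c_B$, $\s(y^*) \subseteq \s(y)$, $y^* \ge 0$, and $a_i x' = c_a^i + c_b^i - b_i y^*$ for $i = 1,2$. Then $(x', y^*)$ satisfies every constraint of the row-reduced system, so $(x', y^*) \in P \oplus_3 Q$; by simplicity it is a vertex (a feasible point of strictly smaller support would produce a degenerate vertex); and since $x, x'$ are adjacent in $P_A$ and $(x,y), (x',y^*)$ both lie on the face of $P \oplus_3 Q$ affinely isomorphic to $P_A$ (the face setting $y_i \ge 0$ to equality for $i \notin \s(y)$), the two vertices are adjacent.

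The main obstacle, and the only place where the $3$-sum genuinely differs from the $2$-sum, is the claim that the face of $P \oplus_3 Q$ obtained by tightening the inequalities $y_i \ge 0$ for $i \notin \s(y)$ is affinely isomorphic to $P_A$ — equivalently, that on this face the variables $x$ range exactly over $P_A$ with $y$ determined (up to its support freedom) by $x$. In the $2$-sum there was a single shared row, so this was transparent; here there are two shared rows, and one must check that restricting to $\s(z)\subseteq \s(y)$ collapses the band to a single point for a fixed $x'$, so that $y^*$ is essentially forced. This follows because $(x,y)$ is an $\mathbf{x}$-vertex: as in the proof of Lemma \ref{lem:out_of_band}, at an $\mathbf{x}$-vertex the support restriction on $z$ leaves $\band_{\mathbf{x}}(y)$ at most one-dimensional in the relevant direction, and the two shared equations pin down a unique $y^*$ given $x'$. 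I would therefore isolate this observation first and then feed it into the adjacency argument; the remaining bookkeeping is routine and parallels Lemma \ref{lem:band_step_criterion} line for line.
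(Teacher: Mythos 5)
Your proposal is correct and follows essentially the same route as the paper, which itself only states that the lemma ``can be proved by modifying the proof of Lemma~\ref{lem:band_step_criterion}''; your two directions and the simplicity/degeneracy argument are exactly that modification, and you correctly isolate the one new point, namely that at a simple $\mathbf{x}$-vertex the columns of $\bigl[\begin{smallmatrix} b_1 \\ b_2 \\ B\end{smallmatrix}\bigr]$ indexed by $\s(y)$ have full column rank, so $y^*$ is affinely determined by $x'$ and the tightened face projects isomorphically into (an edge of) $P_A$, giving adjacency. One phrasing slip: the band itself is a $2$-dimensional polytope (as the paper notes), so it is not the band but the fiber of the map $z \mapsto (b_1z, b_2z)$ over a fixed $x'$ that collapses to a point; your stated conclusion is the right one, and the rank count just described justifies it.
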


Like the $2$-sum, the distance between $\mathbf{x}$-vertices with identical $x$-coordinates is small. Lemma \ref{lem:m-path_3sum} generalizes Lemma \ref{lem:m-path} to the $3$-sum. A proof mirrors the proof of Lemma \ref{lem:m-path}.

\begin{lem}\label{lem:m-path_3sum}
   If $(x,y), (x,y') \in P \oplus_3 Q$  are both $\mathbf{x}$-vertices, then 
    \[
        d((x,y), (x,y')) \leq d(\bar{B}).
    \]
\end{lem}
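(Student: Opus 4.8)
\textbf{Proof proposal for Lemma~\ref{lem:m-path_3sum}.}

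The plan is to mimic the proof of Lemma~\ref{lem:m-path} essentially verbatim, with the only new bookkeeping coming from the fact that $\bar B = \binom{b}{B}$ in the $3$-sum setting carries the \emph{two} extra rows $b_1, b_2$ rather than one, but this does not affect the argument since we only ever work inside a single fiber $Q(x)$ where $x$ is held fixed. First I would show that, for an $\mathbf x$-vertex $(x,y)$, the point $y$ is a vertex of the fiber polyhedron $Q(x)$. This follows exactly as in Lemma~\ref{lem:out_of_band}: if $y$ were not a vertex of $Q(x)$, there would be a vertex $\bar y \in Q(x)$ with $\s(\bar y)\subsetneq \s(y)$, and then $(x,\bar y)\in P\oplus_3 Q$ would contradict that $(x,y)$ is a vertex of $P\oplus_3 Q$ (using that $P\oplus_3 Q$ is simple, or at least that we may perturb to assume so). The same reasoning applies to $y'$.

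Next, since $y$ and $y'$ are both vertices of $Q(x)$, and $Q(x)$ is a standard-form polyhedron whose equality constraint matrix is $\bar B$ (it is $\{z : b_i z = c_{a_i}+c_{b_i}-a_i x \text{ for } i=1,2,\ Bz = c_B,\ z\ge 0\}$, i.e.\ $\bar B z$ equals a particular right-hand side), we have $d(Q(x)) \le d(\bar B)$ by definition of $d(\cdot)$ applied to the class of right-hand sides for $\bar B$. Let $y = y^0, y^1, \ldots, y^k = y'$ be a shortest path between $y$ and $y'$ in $Q(x)$, so $k \le d(\bar B)$. I would then argue that each $(x, y^i)$ is a vertex of $P\oplus_3 Q$: clearly $(x,y^i)\in P\oplus_3 Q$, and if it were not a vertex there would be $(\hat x,\hat y)$ with $\s(\hat x,\hat y)\subsetneq \s(x,y^i)$; but $x\in P_A$ is a vertex, forcing $\hat x = x$, hence $\s(\hat y)\subsetneq \s(y^i)$, contradicting that $y^i$ is a vertex of $Q(x)$. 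Finally, consecutive $(x,y^i)$ and $(x,y^{i+1})$ are adjacent in $P\oplus_3 Q$ because $y^i$ and $y^{i+1}$ are adjacent in $Q(x)$ and the map $z \mapsto (x,z)$ embeds $Q(x)$ as a face of $P\oplus_3 Q$. Concatenating gives $d((x,y),(x,y')) \le k \le d(\bar B)$.

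I do not expect a serious obstacle here; the lemma is deliberately the ``easy'' half that transfers. The one point to state carefully is why $Q(x)$ is a face of $P\oplus_3 Q$ (so that adjacency and vertex-ness are inherited): it is cut out by setting $y_j \ge 0$ to equality for every $j\notin\s(y)$, exactly as in the discussion following Definition~\ref{def:lift}. The mixed-vertex category introduced for the $3$-sum plays no role, since the hypothesis is that both endpoints are $\mathbf x$-vertices; and the two shared rows $b_1,b_2$ only enter through the right-hand side of $Q(x)$, which is harmless because $d(\bar B)$ already ranges over all right-hand sides.
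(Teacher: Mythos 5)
Your proposal takes exactly the route the paper intends: the paper offers no separate argument for Lemma~\ref{lem:m-path_3sum} beyond stating that a proof mirrors Lemma~\ref{lem:m-path}, and your argument is precisely that mirroring ($y,y'$ are vertices of the fiber $Q(x)$; a shortest path there has length at most $d(\bar{B})$ because $Q(x)$ is a standard-form polyhedron with equality matrix $\bar{B}$, now carrying the two rows $b_1,b_2$; each $(x,y^i)$ is a vertex by the support argument; consecutive ones are adjacent). One correction to your closing parenthetical: the face of $P \oplus_3 Q$ realizing $\{x\} \times Q(x)$ is \emph{not} obtained by setting $y_j \geq 0$ to equality for $j \notin \s(y)$ --- that face need not contain $(x,y')$ or the intermediate path vertices, whose $y$-supports differ from $\s(y)$ --- but rather by setting $x_j = 0$ for all $j \notin \s(x)$; since $x$ is a vertex of $P_A$, the columns of $A$ indexed by $\s(x)$ are linearly independent, so every point of that face has its $x$-part pinned to $x$, and the face is exactly $\{x\} \times Q(x)$. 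With that substitution your adjacency step is justified and the proof is complete, matching the paper's.
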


Lemma \ref{lem:m-path_3sum} enables us to prove a $3$-sum analogue of Lemma \ref{lem:in_band} for $\mathbf{x}$- and $\mathbf{y}$-vertices. There is a remaining challenge in determining the distance between pairs of mixed-vertices. The $\mathbf{x}$-band is an $n$-gon because it is a bounded $2$-dimensional projection of a polyhedron. In some cases, when two mixed-vertices lie on the same edge of the $\mathbf{x}$-band, the distance between them can be bounded by the diameter of a $2$-sum. In general, however, the arguments we present do not give a distance bound for pairs of mixed-vertices that are not on the same band.

It is plausible that an understanding of the distance between mixed-vertices will yield a diameter bound for the $3$-sum. Following the strategy of Theorem \ref{thm:2sum_diam_bound}, it would be natural to show that any vertex is close to a mixed-vertex and then construct a walk between a pair of mixed-vertices.

\section{Conclusion and Outlook}\label{sec:outlook}

In this paper, we established that the diameter of a $2$-sum polyhedron is quadratic in the diameters of its parts. The introduction of the band of a vertex provides a framework that relates adjacency in $P \oplus_2 Q$ to adjacency in $P$ and $Q$, enabling walks to be lifted from $P$ and $Q$ to $P \oplus_2 Q$. In fact, we proved that the distance between pairs of vertices in the same band is linear; essentially one has linear diameter bounds within certain sections of $P \oplus_2 Q$. It remains of high interest to study whether a general linear bound can be derived. 

If the diameter of the $2$-sum is linear, then it follows that the diameter of a TU polyhedron whose constraint matrix is not a $3$-sum is a linear function of the Hirsch bound. Our proof technique involved ``jumps,'' each bounded by the diameter of the underlying polyhedra. We showed that if a stronger bound on the number of steps in these jumps can be derived, then the $2$-sum diameter bound may become linear. 

Other natural directions of research include investigating the diameter of an iterated $2$-sum with the same base polyhedron, i.e., $P \oplus_2 P \oplus_2 \cdots \oplus_2 P$. In particular, it would be interesting to determine whether the bound on the diameter can be improved by exploiting the special structure of the construction. Additionally, our bound relates the diameter of a $2$-sum polyhedron to the class of polyhedra that allow the decomposition. Given a $2$-sum polyhedron $R$ and polyhedra $P$ and $Q$ such that $R = P \oplus_2 Q$, can we bound the diameter of $R$ in terms of just the diameters of $P$ and $Q$? In general, the answer is no. For any $n \in \N$, we can find $R$, $P$, and $Q$ such that $R = P \oplus_2 Q$, $\dim(R) = n$ and $\dim(P) = \dim(Q) = 0$. However, if we require that  $\dim(R) = \dim(P) + \dim(Q)$, essentially $P$ and $Q$ are not degenerate and are given by minimal descriptions, then it remains open whether or not a quadratic bound in terms of $d(P)$ and $d(Q)$ exists.

Further, our techniques transfer to the addition of a unit column and certain faces of $3$-sum polyhedra. Adding a unit column is a special case of the $2$-sum operation and thus, our bound transfers immediately. In fact, through a careful analysis, we proved that the addition of a unit column retains a linear diameter bound. For the $3$-sum, we demonstrated that the distance between vertices on certain faces is also linear. We believe that the band of a vertex, which we showed in the $3$-sum setting to take the form of a convex polytope in dimension $2$, can be a powerful tool to establish a polynomial bound on the diameters of general $3$-sums. In our construction of the $2$-sum diameter bound, when a step left the band, we were able to construct a jump of bounded length to re-enter the band. Finding such a jump for the $3$-sum is more challenging and would be a crucial step in using the band of a vertex to construct a diameter bound for the $3$-sum. A promising starting point is to determine how to lift walks from the input polyhedra to construct walks between mixed-vertices.

\section*{Acknowledgment}

\noindent This work was supported by Air Force Office of Scientific Research grant FA9550-21-1-0233 (Complex Networks) and NSF grant 2006183 (Algorithmic Foundations, Division of Computing and Communication Foundations).

\end{document}